\numberwithin{equation}{section}
\providecommand{\U}[1]{\protect\rule{.1in}{.1in}}
\providecommand{\U}[1]{\protect \rule{.1in}{.1in}}
\newtheorem{theorem}{Theorem}[section]
\newtheorem{corollary}[theorem]{Corollary}
\newtheorem{definition}[theorem]{Definition}
\newtheorem{lemma}[theorem]{Lemma}
\newtheorem{proposition}[theorem]{Proposition}
\newtheorem{remark}[theorem]{Remark}
\newtheorem{assumption}[theorem]{Assumption}
\newenvironment{proof}[1][Proof]{\noindent \textbf{#1.} }{\  \rule{0.5em}{0.5em}}
\DeclareMathOperator*{\esssup}{ess\,sup}
\DeclareMathOperator*{\essinf}{ess\,inf}
\begin{document}
	\title{Optimal Multiple Stopping Problems under $g$-expectation}
	\author{ Hanwu Li\thanks{Center for Mathematical Economics, Bielefeld University,
			hanwu.li@uni-bielefeld.de.  This work was supported by the German Research Foundation (DFG) via CRC1283.}}
			\date{}
		\maketitle
		
		\begin{abstract}
			In this paper, we study the optimal multiple stopping problem under Knightian uncertainty both under discrete-time case and continuous-time case. The Knightian uncertainty is modeled by a single real-valued function $g$, which is the generator of a kind of backward stochastic differential equations (BSDEs). We show that the value function of the multiple stopping problem coincides with the one corresponding to a new reward sequence or process. For the discrete-time case, this problem can be solved by an induction method which is a straightforward generalization of the single stopping theory. For the continuous-time case, we furthermore need to establish the continuity of the new reward family. This result can be applied to the pricing problem for swing options in financial markets, which give the holder of this contract at least two times rights to exercise it. 
		\end{abstract}
	
	\textbf{Key words}: optimal multiple stopping, Knightian uncertainty, $g$-expectation 
	
	\textbf{MSC-classification}: 60G40
	
	\section{Introduction}
	
	The optimal stopping problem is one of the fundamental problems in probability theory with wide applications, such as pricing for American options, the entry decision of a firm into a new market and the Wald test. The main objective of this problem are two manifolds. First, we would like to compute the value function as explicitly as possible. Second, we would try to find the optimal stopping time at which the value is attained. In order to solve this problem, there are two approaches: the backward induction and the Snell envelope, where both approaches can be applied to the finite discrete time case and for the other cases, only the Snell envelope is effective. For more details, we may refer to the paper \cite{E}, \cite{PS} and the references therein.
	
	In practice, there are some financial derivatives such that the holder of this contract can exercise several times before the maturity date, for example, the swing contract in energy market. To avoid mathematical trivialities, we assume that there exists some constant $\delta>0$ such that the difference of any two successive exercise times should be no less than it, which means that one can not exercise more than one right at the same time. There are tremendous papers working the multiple optimal stopping problem. To name a few, \cite{H} studies this problem in the discrete time case and then \cite{CT} investigates the continuous time case. Both of them obtained the existence of the optimal stopping times by considering a family of auxiliary optimal stopping problems with single exercise right. \cite{MH} exhibits a dual representation for the marginal value of the multiple stopping problem, which gives a natrual upper bound and therefore, can be applied to the numerical approximation.
	
	It is worth pointing out that, all the works listed above are based on the assumption that there is one and only one probability measure under consideration. It means that the investor have fully confident in the future state of the payoff in financial market. However, it is difficult to assess the probability in the real world, in other words, the investor faces Knightian uncertainty or ambiguity. Compared with the classical case, the investor will take a set of beliefs to measure the future payoff, which indicates that the evaluation is nonlinear. Therefore, in order to introduce the optimal stopping problem under Knightian uncertainty, one need to first investigate the set of beliefs or the associated nonlinear expectations. For the multiple priors approach, we may refer to the paper \cite{ETZ}, \cite{NZ}, \cite{R}, with the usual assumption of stable under pasting or rectangularity. The second group relies on the nonlinear expectation approach, such as \cite{BY1}, \cite{BY2}, \cite{CR}, \cite{LR}, where the nonlinear operator satisfies time consistency, local property and translation invariance. It is important to note that the multiple priors in \cite{ETZ} and \cite{NZ} are non-dominated and can be mutually singular and the nonlinear expectation in \cite{LR} is fully nonlinear, which do not preserve donimated convergence property.
	
	 In this paper, we aims to solve the multiple stopping problem under Knightian uncertainty, where Knightian uncertainty is modeled by $g$-expectation. It is well recognized that the $g$-expectatin first proposed by Peng \cite{P97} is a powerful tool to study problems facing Knightian uncertainty, since it preserves almost all properties of the classical expectation except the linearity by choosing some appropriate generator $g$ (see \cite{CR}, \cite{EPQ}). More precisely, if the generator $g$ is progressively measurable, Lipschitz continuous and concave, the $g$-expectation corresponds to the variational expectations. Therefore, the optiaml stopping problem under $g$-expectation is essentially a ``$\sup_\tau\inf_P$" problem.
	 
	  In fact, the optimal multiple stopping problem under Knightian uncertainty has been tackled by Li \cite{L}. However, to construct the optimal stopping times, the nonlinear expectation is assumed to be sub-additive and positive homogenous, which makes the optimal stopping problem a ``$\sup_\tau$$\sup_P$" problem. Compared with \cite{L}, the problem in this paper is more involved, which requires stronger regularity condition on the reward. Roughly speaking, the reward in \cite{L} is given by a set of random variables which are continuous along stopping times in expectation while the reward in the present paper is given by a continuous, progressively measurable process.
	 
	 In order to solve this problem, we construct a new reward process by induction, which is induced by the value function of an optimal single stopping problem. Then we obtain the associated new value functions as well as the optimal stopping times. By an induction method, the value function of the optimal multiple stopping problem is equivalent to the one corresponds to the single stopping problem with this new reward process. The optimal multiple stopping times can be constructed by this result easily. It is worth pointing out that, for the continuous-time case, the optimal stopping times exist when the reward process satisfies some regularity properties. This is the main difficulty in dealinig with the optimal multiple stopping problem. For this purpose, we need to apply the the fact that the value functions of the optimal stopping problem coincides with the solutions to reflected BSDEs as well as the fact the $g$-expectation is continuous in time. A natural observation is that the value function of the continuous time case is the limit of the ones in the discrete-time case, which would be useful to getting numerical approximations.
	 
	 We then investigate the properties of the optimal multiple stopping times for some typcial situations. Recall that if the reward process is a $g$-submartingale, the maturity time is optimal for the single stopping problem. Hence, it is easy to conjecture that the sequence of times $(T-(d-1)\delta, T-(d-2)\delta,\cdots, T)$ is the optimal multiple stopping times with $d$ exercise times and refracting period $\delta$. Second, due to the restriction that the difference between any two successive exercise times should be no less than $\delta$, one should exercise earlier if he has more than one exercise rights. These properties can be proved under some additional assumptions on the $g$-expectation (sublinear or superlinear). Third, suppose that the Knightian uncertainty is characterized by a special $g$-expectation, called $\kappa$-ignorance as in \cite{CE}, which corresponds to the generator $g(t,z)=-\kappa|z|$. When the reward process is given by a monotone increasing payoff function of a geometric Brownian motion, the value function coincides with the one of the classical case under the worst-case scenario with drfit $\kappa$. Similar results still holds for the decreasing case.
	 
	 This paper is organized as the following. In Section 2, we investigate the optimal multiple stopping problem for the discrete-time case. The continuous-time case is studied in Section 3. Section 4 is devoted to the properties of optimal stopping times and value functions of the optimal multiple stopping problem.   
		
	\section{Optimal multiple stopping: the discrete time case}
	In this section, we fix the time horizon $T\in\mathbb{N}$. Consider a filtered probability space $(\Omega,\mathcal{F},(\mathcal{F}_t)_{t\in[0,T]},P_0)$ satisfying the usual conditions in which the canonical process $B=\{B_t,0\leq t\leq T\}$ is a $d$-dimensional standard Brownian motion. $E^P[\cdot]$ is the expectation taken under $P$. When $P=P_0$, the expectation is usually denoted by $E[\cdot]$.
	
	Consider an investor who has $L$ times rights to exercise a contract under the constraints that he has to wait a minimal time $\delta\in\mathbb{N}$ before he exercises the next right, where $(L-1)\delta\leq T$. This assumption will exclude the case that the investor would exercise all the rights at the same time. When he exercises at time $i$, he will get the reward $X(i)$. The objective of the investor is trying to maximize the total expected reward. We first need to fix some technical condition on the payoff process.
	\begin{assumption}\label{a6}
		The payoff process $\{X(i)\}_{i=0}^T$ is nonnegative, adapted and satisfies the following condition:
		\begin{displaymath}
			E[\sum_{i=0}^{T}|X(i)|^2]<\infty.
		\end{displaymath}
	\end{assumption}
    
    Suppose that the investor faces Knightian uncertainty, that is, he will choose a set of beliefs $\mathcal{P}$ to evaluate his gain or loss. It is natural to assume that the investor is ambiguity-averse who measures the payoffs $\xi$  according to the variational preference given by 
    \begin{displaymath}
    	\mathcal{E}[\xi]=\inf_{P\in\mathcal{P}}(E^P[\xi]+c(P)),
    \end{displaymath}
    where $c(P)$ is a penalty function. El Karoui et al. \cite{EPQ} established that the $g$-expectation corresponds to the variational preference by choosing some appropriate generator $g$, where the (conditional) $g$-expectation $\mathcal{E}_t[\xi]$ of a square-integrable random variable $\xi$ is given by the solution of the following backward stochastic differential equation (BSDE):
    \begin{displaymath}
    	\mathcal{E}_t[\xi]=\xi+\int_t^T g(s,Z_s)ds-\int_t^T Z_sdB_s.
    \end{displaymath}
    For this purpose, we need to put the following assumptions on the generator $g$.
    
    \begin{assumption}\label{a7}
    	The generator $g:\Omega\times[0,T]\times \mathbb{R}^d\rightarrow\mathbb{R}$ satisfies the following conditions:
    	\begin{description}
    		\item[(1)] For any $z\in\mathbb{R}^d$, the process $\{g(t,z)\}_{t\in[0,T]}$ is progressively measurable with 
    		\begin{displaymath}
    			E[\int_0^T |g(t,z)|^2dt]<\infty;
    		\end{displaymath}
    		\item[(2)] There exists a constant $\kappa>0$ such that for any $z,z'\in\mathbb{R}^d$ and $t\in[0,T]$, 
    		\begin{displaymath}
    			|g(t,z)-g(t,z')|\leq \kappa|z-z'|;
    		\end{displaymath}
    		\item[(3)] $g$ is concave in $z$ and $g(t,0)=0$ for any $t\in[0,T]$.
    	\end{description}
    \end{assumption}

    Indeed, let $f$ be the convex dual of $g$, i.e.,
    \begin{displaymath}
    	f(\omega,t,\theta)=\sup_{z\in\mathbb{R}^d}(g(\omega,t,z)-z\cdot\theta).
    \end{displaymath}
    We denote by $D$ the collection of all progressively measurable processes $\{\theta_t\}_{t\in[0,T]}$ with 
    \begin{displaymath}
    	E[\int_0^T |f(s,\theta_s)|^2ds]<\infty.
    \end{displaymath}
    For each fixed $\theta\in D$, we may construct a probability measure $P_\theta$ equivalent to $P_0$ with density
    \begin{displaymath}
    	\frac{dP_\theta}{dP_0}=\exp(\int_0^T\theta_tdB_t-\frac{1}{2}\int_0^T\theta_t^2dt).
    \end{displaymath}
    Then, for any stopping time $\tau\in[t,T]$ and an $\mathcal{F}_\tau$-measurable and square-integrable random variable $\xi$, the conditional $g$-expectation can be represented as the following:
    \begin{displaymath}
    	\mathcal{E}_t[\xi]=\esssup_{\theta\in D}(E^\theta_t[\xi]+E^\theta_t[\int_t^\tau f(s,\theta_s)ds]),
    \end{displaymath}
    where $E^\theta_t[\cdot]$  is the conditional expectation taken under probability $P_\theta$. Hence, the $g$-expectation coincides with the variational preference when $g$ satisfies Assumption \ref{a7}.
    
    Now we formulate the multiple stopping problem under $g$-expectation. For notional convenience, for any process $\{\eta(i)\}_{i=0}^T$, set $\eta(i)=0$ and $\mathcal{F}_i=\mathcal{F}_T$ for $i>T$. For $L\geq 2$, let $\mathcal{S}_i(L,\delta)$ be the set of all vectors of stopping times $\tau^{(L)}(i)=(\tau_1(i),\cdots,\tau_L(i))$ such that $i\leq \tau_1(i)$ and, for any $2\leq j\leq L$, $\tau_{j-1}(i)+\delta\leq \tau_j(i)$. If $L=1$, then $\mathcal{S}_i(1,\delta)=\mathcal{S}_i$, the collection of all stopping times no less than $i$. The value function of the optimal multiple stopping problem can be stated as follows:
    \begin{equation}\label{e2}
    	Z_L(i)=\esssup_{\tau^{(L)}(i)\in\mathcal{S}_i(L,\delta)}\mathcal{E}_i[\sum_{j=1}^L X(\tau_j(i))].
    \end{equation}
    Then, the value function has the following representation.
    
    \begin{lemma}\label{l6}
    	The value function of the multiple stopping problem can be obtained recursively: for any $j=2,\cdots,L$, $Z_j(T)=X_T$ and 
    	\begin{displaymath}
    	Z_j(i)=\max\{X(i)+\mathcal{E}_i[Z_{j-1}(i+\delta)], \mathcal{E}_i[Z_j(i+1)]\}, \ i=0,\cdots,T-1,
    	\end{displaymath}
    	where $Z_1$ is the value function of the optimal single stopping problem with reward sequence $\{X(i)\}_{i=1}^T$.
    \end{lemma}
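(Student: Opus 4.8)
The plan is to read the asserted identity as a dynamic programming principle and to prove it directly, at a fixed pair $(i,j)$, by establishing the two inequalities separately from the definitions of $Z_{j-1}(i+\delta)$ and $Z_j(i+1)$; the recursion is then what allows one (in the subsequent theorem) to identify the $j$-fold problem with a single-stopping problem. The boundary value $Z_j(T)=X(T)$ is immediate: since $X(i)=0$ for $i>T$ and every admissible vector in $\mathcal{S}_T(j,\delta)$ has $\tau_2\geq T+\delta>T$, only the first right can contribute, and as each term is $\mathcal{F}_T$-measurable, $\mathcal{E}_T[X(\tau_1)]=X(\tau_1)\leq X(T)$ with equality at $\tau_1=T$. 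The argument rests on four structural properties of the conditional $g$-expectation, all consequences of Assumption \ref{a7}: monotonicity; time consistency $\mathcal{E}_i[\mathcal{E}_s[\xi]]=\mathcal{E}_i[\xi]$ for $i\leq s$; cash invariance $\mathcal{E}_i[\eta+\xi]=\eta+\mathcal{E}_i[\xi]$ for $\mathcal{F}_i$-measurable $\eta$, valid because $g$ does not depend on the $y$-variable; and the local property, namely $\mathcal{E}_i[\xi]=\mathcal{E}_i[\xi']$ on $A$ whenever $\xi=\xi'$ on $A\in\mathcal{F}_i$.

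For $Z_j(i)\geq\max\{\cdots\}$ I would test against two families of admissible strategies. Fixing $\tau_1=i$ and letting $(\tau_2,\dots,\tau_j)$ range over $\mathcal{S}_{i+\delta}(j-1,\delta)$, cash invariance gives $\mathcal{E}_i[X(i)+\sum_{k=2}^j X(\tau_k)]=X(i)+\mathcal{E}_i[\sum_{k=2}^j X(\tau_k)]$, and after the essential-supremum interchange discussed below this yields $Z_j(i)\geq X(i)+\mathcal{E}_i[Z_{j-1}(i+\delta)]$. Letting instead the whole vector range over $\mathcal{S}_{i+1}(j,\delta)\subset\mathcal{S}_i(j,\delta)$ and using time consistency across level $i+1$ gives $Z_j(i)\geq\mathcal{E}_i[Z_j(i+1)]$.

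For the reverse inequality I would take an arbitrary $\tau^{(j)}\in\mathcal{S}_i(j,\delta)$ and split on $A=\{\tau_1=i\}\in\mathcal{F}_i$. On $A$ the tail $(\tau_2,\dots,\tau_j)$ completes to some $\sigma\in\mathcal{S}_{i+\delta}(j-1,\delta)$ agreeing with it on $A$, so that $\sum_{k=1}^j X(\tau_k)=X(i)+\sum_{k=2}^j X(\sigma_k)$ on $A$; the local property, cash invariance, time consistency, and the definition of $Z_{j-1}$ then give $\mathbf{1}_A\mathcal{E}_i[\sum_{k=1}^j X(\tau_k)]\leq\mathbf{1}_A(X(i)+\mathcal{E}_i[Z_{j-1}(i+\delta)])$. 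Symmetrically, on $A^c=\{\tau_1\geq i+1\}$ the vector completes to a strategy in $\mathcal{S}_{i+1}(j,\delta)$, giving $\mathbf{1}_{A^c}\mathcal{E}_i[\sum_{k=1}^j X(\tau_k)]\leq\mathbf{1}_{A^c}\mathcal{E}_i[Z_j(i+1)]$. Adding the two and bounding each indicator term by the maximum yields $\mathcal{E}_i[\sum_{k=1}^j X(\tau_k)]\leq\max\{X(i)+\mathcal{E}_i[Z_{j-1}(i+\delta)],\mathcal{E}_i[Z_j(i+1)]\}$, and taking the essential supremum over $\tau^{(j)}$ closes the argument.

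The main obstacle is the interchange of the essential supremum with $\mathcal{E}_i$ used in both directions, that is the identity $\mathcal{E}_i[\esssup_\alpha Y_\alpha]=\esssup_\alpha\mathcal{E}_i[Y_\alpha]$ for families of the form $Y_\alpha=\mathcal{E}_s[\sum_k X(\tau_k)]$. I would first show that such a family is directed upward: given two admissible strategies, pasting them along the $\mathcal{F}_s$-set where one value dominates the other produces an admissible strategy whose value is the pointwise maximum. By the standard essential-supremum theorem the supremum is then an increasing limit of such values, and continuity from below of the $g$-expectation (its monotone convergence property, itself a consequence of the $L^2$-stability of the associated BSDEs under Assumption \ref{a7}) lets $\mathcal{E}_i$ pass through the limit. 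This directedness-plus-continuity step is precisely where the nonlinearity of $\mathcal{E}_i$ must be handled with care, in contrast to the linear case where the interchange is automatic.
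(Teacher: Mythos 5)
Your proposal is correct and follows essentially the same route as the paper: the upper bound is obtained by splitting an arbitrary admissible vector on the set $\{\tau_1(i)=i\}\in\mathcal{F}_i$ and using the local property together with time consistency, while the lower bound rests on showing the family of values is directed upward by pasting strategies along the set where one dominates, so that monotone convergence of the $g$-expectation permits the interchange of $\mathcal{E}_i$ with the essential supremum. Your explicit identification of that interchange as the only genuinely nonlinear difficulty, and your elaboration of the terminal condition, merely spell out steps the paper treats as routine.
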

    
    \begin{proof}
    	It is easy to check the terminal condition holds. For any $\tau^{(j)}(i)=(\tau_1(i),\cdots,\tau_j(i))$ with $i\leq T-1$, on the set $A=\{\tau_1(i)=i\}\in\mathcal{F}_i$, we have $(\tau_2(i),\cdots,\tau_j(i))\in\mathcal{S}_{i+\delta}(j-1,\delta)$ and 
    	\begin{align*}
    	\mathcal{E}_i[\sum_{l=1}^jX(\tau_l(i))]I_A&=\mathcal{E}_i[\{X(i)+\sum_{l=2}^jX(\tau_l(i))\}I_A]=X(i)I_A+\mathcal{E}_i[\mathcal{E}_{i+\delta}[\sum_{l=2}^jX(\tau_l(i))]]I_A\\ &\leq \{X(i)+\mathcal{E}_i[Z_{j-1}(i+\delta)]\}I_A.
    	\end{align*}
    	On the set $A^c$, we obtain that $\tau^{(j)}(i)\in\mathcal{S}_{i+1}(j,\delta)$. It follows that 
    	\begin{displaymath}
    	\mathcal{E}_i[\sum_{l=1}^jX(\tau_l(i))]I_{A^c}=\mathcal{E}_i[\mathcal{E}_{i+1}[\sum_{l=1}^jX(\tau_l(i))]I_{A^c}]\leq \mathcal{E}_i[Z_j(i+1)]I_{A^c}.
    	\end{displaymath}
    	Combining the above two equations yields that 
    	\begin{displaymath}
    	\mathcal{E}_i[\sum_{l=1}^jX(\tau_l(i))]\leq \max\{X(i)+\mathcal{E}_i[Z_{j-1}(i+\delta)],\mathcal{E}_i[Z_j(i+1)]\}.
    	\end{displaymath}
    	Since $\tau^{(j)}(i)$ is chosen arbitrarily, we deduce that 
    	\begin{displaymath}
    	Z_j(i)\leq  \max\{X(i)+\mathcal{E}_i[Z_{j-1}(i+\delta)],\mathcal{E}_i[Z_j(i+1)]\}.
    	\end{displaymath}
    	
    	We are now in a position to show the inverse inequality. We claim that, for any $j=1,\cdots,L$ and $i=0,\cdots,T$, there exists a sequence $\{(\tau_1^n(i),\cdots,\tau_j^n(i))\}_{n\in\mathbb{N}}\subset \mathcal{S}_i(j,\delta)$ such that $\mathcal{E}_i[\sum_{l=1}^j X(\tau_l^n(i))]$ converges to $Z_j(i)$ upward. Indeed, it is sufficient to prove that, for any $(\tau_1^{m}(i),\cdots,\tau_j^m(i))\in\mathcal{S}_i(j,\delta)$, $m=1,2$, we can find $(\tau_1(i),\cdots,\tau_j(i))\in\mathcal{S}_i(j,\delta)$ such that 
    	\begin{displaymath}
    	\mathcal{E}_i[\sum_{l=1}^j X(\tau_l(i))]=\max\{	\mathcal{E}_i[\sum_{l=1}^j X(\tau^1_l(i))],	\mathcal{E}_i[\sum_{l=1}^j X(\tau^2_l(i))]\}.
    	\end{displaymath}
    	For this purpose, set $B=\{	\mathcal{E}_i[\sum_{l=1}^j X(\tau^1_l(i))]\geq 	\mathcal{E}_i[\sum_{l=1}^j X(\tau^2_l(i))]\}\in\mathcal{F}_i$ and $\tau_l(i)=\tau^1_l(i)I_B+\tau^2_l(i)I_{B^c}$, $l=1,\cdots,j$. It is easy to check that $(\tau_1(i),\cdots,\tau_j(i))\in\mathcal{S}_i(j,\delta)$ and 
    	\begin{displaymath}
    	\mathcal{E}_i[\sum_{l=1}^j X(\tau_l(i))]=\mathcal{E}_i[\sum_{l=1}^j X(\tau^1_l(i))]I_B+\mathcal{E}_i[\sum_{l=1}^j X(\tau^2_l(i))] I_{B^c}=\max\{	\mathcal{E}_i[\sum_{l=1}^j X(\tau^1_l(i))],	\mathcal{E}_i[\sum_{l=1}^j X(\tau^2_l(i))]\}.
    	\end{displaymath}
    	Hence, the claim follows. Then, noting that $\mathcal{S}_{i+1}(j,\delta)\subset \mathcal{S}_i(j,\delta)$, by monotone convergence theorem, we have 
    	\begin{displaymath}
    	\mathcal{E}_i[Z_j(i+1)]=\mathcal{E}_i[\lim_{n\rightarrow\infty}\mathcal{E}_{i+1}[\sum_{l=1}^j X(\tau_l^n(i+1))]]=\lim_{n\rightarrow\infty}\mathcal{E}_i[\mathcal{E}_{i+1}[\sum_{l=1}^j X(\tau_l^n(i+1))]]\leq Z_j(i).
    	\end{displaymath}
    	Note that $(i,\tau_1^n(i+\delta),\cdots,\tau_{j-1}^n(i+\delta))\in\mathcal{S}_i(j,\delta)$ if $(\tau_1^n(i+\delta),\cdots,\tau_{j-1}^n(i+\delta))\in \mathcal{S}_{i+\delta}(j-1,\delta)$. Similar analysis as above indicates that $X(i)+\mathcal{E}_i[Z_{j-1}(i+\delta)]\leq Z_j(i)$. The proof is complete.
    \end{proof}

	In order to solve the optimal multiple stopping problem \eqref{e2}, motivated by Lemma \ref{l6}, we introduce the following family of optimal stopping problems with one exercise right:
	\begin{equation}\label{e3}
		Y^{(0)}\equiv 0 \textrm{ and } Y^{(j)}(i)=\esssup_{\tau\in \mathcal{S}_{i}}\mathcal{E}_i[X^{(j)}(\tau)],
	\end{equation}
	where $X^{(j)}(i)=X(i)+\mathcal{E}_i[Y^{(j-1)}(i+\delta)]$, for any $i=0,\cdots,T$ and $j=1,2,\cdots, L$. Our first observation is that the process $\{X^{(j)}(i)\}_{i=0}^T$ satisfies Assumption \ref{a6}, for any $j=1,\cdots,L$.
	\begin{lemma}\label{l2}
		For any $j=1,\cdots,L$, the process $\{X^{(j)}(i)\}_{i=0}^T$ is nonnegative, adapted and satisfies the following condition:
		\begin{displaymath}
		E[\sum_{i=0}^{T}|X^{(j)}(i)|^2]<\infty.
		\end{displaymath}
	\end{lemma}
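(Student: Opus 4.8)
The plan is to argue by induction on $j$, relying throughout on three standard properties of the conditional $g$-expectation: monotonicity (the comparison theorem for BSDEs, so that $\xi\leq\eta$ implies $\mathcal{E}_i[\xi]\leq\mathcal{E}_i[\eta]$), the normalization $\mathcal{E}_i[0]=0$ (which follows from $g(t,0)=0$ in Assumption \ref{a7}), and the a priori $L^2$-estimate for Lipschitz BSDEs, namely that there is a constant $C=C(\kappa,T)$ with $E[|\mathcal{E}_i[\xi]|^2]\leq C\,E[|\xi|^2]$ for every square-integrable random variable $\xi$ measurable with respect to some $\mathcal{F}_s$, $s\geq i$. For the base case $j=1$, since $Y^{(0)}\equiv 0$ we get $\mathcal{E}_i[Y^{(0)}(i+\delta)]=\mathcal{E}_i[0]=0$, hence $X^{(1)}(i)=X(i)$ and the claim reduces to Assumption \ref{a6}.

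For the inductive step, suppose $\{X^{(j-1)}(i)\}_{i=0}^T$ is nonnegative, adapted and square-integrable. I would first control the single-stopping value $Y^{(j-1)}$. Because $X^{(j-1)}\geq 0$, every $\tau\in\mathcal{S}_i$ satisfies $X^{(j-1)}(\tau)\leq\sum_{k=i}^T X^{(j-1)}(k)$, so by monotonicity $\mathcal{E}_i[X^{(j-1)}(\tau)]\leq\mathcal{E}_i[\sum_{k=i}^T X^{(j-1)}(k)]$, and taking the essential supremum yields $0\leq Y^{(j-1)}(i)\leq\mathcal{E}_i[\sum_{k=i}^T X^{(j-1)}(k)]$. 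The lower bound gives nonnegativity, and $\mathcal{F}_i$-measurability of $Y^{(j-1)}(i)$ (as an essential supremum of $\mathcal{F}_i$-measurable quantities) gives adaptedness. Squaring the upper bound, applying the $L^2$-estimate, and then using Cauchy--Schwarz in the form $\big(\sum_{k=i}^T X^{(j-1)}(k)\big)^2\leq (T+1)\sum_{k=0}^T |X^{(j-1)}(k)|^2$ produces $E[|Y^{(j-1)}(i)|^2]\leq C(T+1)\,E[\sum_{k=0}^T|X^{(j-1)}(k)|^2]$, which is finite by the inductive hypothesis and summable in $i$, so $\sum_i E[|Y^{(j-1)}(i)|^2]<\infty$.

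It then remains to transfer these properties to $X^{(j)}(i)=X(i)+\mathcal{E}_i[Y^{(j-1)}(i+\delta)]$. Adaptedness is immediate, since both summands are $\mathcal{F}_i$-measurable. Nonnegativity follows from $X(i)\geq 0$ together with $\mathcal{E}_i[Y^{(j-1)}(i+\delta)]\geq\mathcal{E}_i[0]=0$, using $Y^{(j-1)}\geq 0$ and monotonicity. For square-integrability I would use $|X^{(j)}(i)|^2\leq 2|X(i)|^2+2|\mathcal{E}_i[Y^{(j-1)}(i+\delta)]|^2$, sum over $i$, and apply the $L^2$-estimate once more to obtain $\sum_i E[|\mathcal{E}_i[Y^{(j-1)}(i+\delta)]|^2]\leq C\sum_i E[|Y^{(j-1)}(i+\delta)|^2]$, which is finite by the previous step (the terms with $i+\delta>T$ vanish under the convention $\eta(i)=0$ for $i>T$). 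This closes the induction.

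The elementary inequalities and the measurability bookkeeping are routine; the ingredient doing the real work — and the step I would be most careful about — is the a priori $L^2$-estimate $E[|\mathcal{E}_i[\xi]|^2]\leq C\,E[|\xi|^2]$ for the conditional $g$-expectation, since it is what propagates square-integrability first through the single-stopping value $Y^{(j-1)}$ and then through the new reward $X^{(j)}$. Everything else is monotonicity and the normalization $\mathcal{E}_i[0]=0$ of the $g$-expectation.
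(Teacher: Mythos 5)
Your proposal is correct and follows essentially the same route as the paper: induct on $j$, bound the single-stopping value by $Y^{(j-1)}(i)\leq \mathcal{E}_i[\sum_{k}X^{(j-1)}(k)]$ via monotonicity, and then propagate square-integrability through the a priori $L^2$-estimate for the BSDE, applied once to $Y^{(j-1)}$ and once more to $\mathcal{E}_i[Y^{(j-1)}(i+\delta)]$. The only difference is that you spell out the monotonicity, normalization, adaptedness and nonnegativity bookkeeping that the paper leaves implicit.
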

    
    \begin{proof}
      We prove this result by induction. Since $X^{(1)}=X$, the statement holds trivially for $j=1$. Assume that the results hold true for any $j\leq k$. It remains to show that $E[|X^{(k+1)}(i)|^2]<\infty$, for any $i=0,\cdots,T$. It is easy to check that 
      \begin{displaymath}
      	Y^{(k)}(i)\leq \mathcal{E}_i[\sum_{j=0}^T X^{(k)}(j)].
      \end{displaymath} 
      By the estimate of BSDE, we have
      \begin{displaymath}
      	E[|Y^{(k)}(i)|^2]\leq E[|\mathcal{E}_i[\sum_{j=0}^T X^{(k)}(j)]|^2]\leq C E[\sum_{j=0}^T |X^{(k)}(j)|^2]<\infty.
      \end{displaymath}
      Applying the estimate for BSDE again yields that 
      \begin{align*}
      	E[|X^{(k+1)}(i)|^2]&\leq C\{E[|X(i)|^2]+E[|\mathcal{E}_i[Y^{(k)}(i+\delta)]|^2]\}\\ &\leq C\{E[|X(i)|^2]+E[|Y^{(k)}(i+\delta)|^2]\}<\infty.
      \end{align*}
    \end{proof}

    By the result of Riedel \cite{R}, for any $j=1,\cdots,L$, $(Y^{(j)}(i):0\leq i\leq T)$ admits the following backward representation: $Y^{(j)}(T)=X^{(j)}(T)$ and for any $i=0,\cdots,T-1$
    \begin{equation}\label{e30}
    	Y^{(j)}(i)=\max\{X^{(j)}(i),\mathcal{E}_i[Y^{(j)}(i+1)]\}.
    \end{equation}
    Besides, the following stopping times
    \begin{equation}\label{e5}
    	\sigma^*_j(i)=\inf\{i\leq l: Y^{(j)}(l)=X^{(j)}(l)\}=\inf\{i\leq l: X^{(j)}(l)\geq \mathcal{E}_l[Y^{(j)}(l+1)]\}
    \end{equation}
    are optimal for the optimal stopping problem \eqref{e3}, that is, $Y^{(j)}(i)=\mathcal{E}_i[X^{(j)}(\sigma^*_j(i))]$. 
     If $i>T$, we define $\sigma_j^*(i)=i$. Now let us set 
    \begin{equation}\begin{split}\label{e}
    \tau^*_{1,j}(i)&=\sigma^*_j(i),\\
    \tau^*_{d+1,j}(i)&=\tau^*_{d,j-1}(\sigma^*_{j}(i)+\delta), \ 1\leq d\leq L-1.
    \end{split}
    \end{equation}
   Then, we have the following characterization of the value function of the auxiliary optimal stopping problem \eqref{e3}.
    \begin{lemma}\label{l3}
    	Let Assumption \ref{a6} hold. Then, we have for any $i=0,1,\cdots,T$, $j=1,\cdots,L$,
    	\begin{equation}\label{e4}
    		Y^{(j)}(i)=\mathcal{E}_i[\sum_{l=1}^j X(\tau^*_{l,j}(i))],
    	\end{equation}
    	and for any $d=2,\cdots,j$, $\tau^*_{d,j}(i)-\tau^*_{d-1,j}(i)\geq \delta$.
    \end{lemma}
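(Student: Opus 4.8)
The plan is to argue by induction on $j$, relying on two structural features of the $g$-expectation: its time consistency and, crucially, its translation invariance (valid here because the generator $g(t,z)$ does not depend on the $y$-variable, by Assumption \ref{a7}). The base case $j=1$ is immediate: since $Y^{(0)}\equiv 0$ we have $X^{(1)}=X$ and $\tau^*_{1,1}(i)=\sigma^*_1(i)$, so the optimality of $\sigma^*_1(i)$ recorded in \eqref{e5} gives $Y^{(1)}(i)=\mathcal{E}_i[X(\sigma^*_1(i))]=\mathcal{E}_i[X(\tau^*_{1,1}(i))]$, which is \eqref{e4}, while the spacing claim is vacuous.

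For the inductive step I would assume \eqref{e4} and the spacing condition for $j-1$. Writing $\sigma:=\sigma^*_j(i)$, optimality of $\sigma$ together with the definition of $X^{(j)}$ yields $Y^{(j)}(i)=\mathcal{E}_i[X^{(j)}(\sigma)]=\mathcal{E}_i[X(\sigma)+\mathcal{E}_\sigma[Y^{(j-1)}(\sigma+\delta)]]$. I would then apply the induction hypothesis to $Y^{(j-1)}$ evaluated at the stopping time $\sigma+\delta$, obtaining $Y^{(j-1)}(\sigma+\delta)=\mathcal{E}_{\sigma+\delta}[\sum_{l=1}^{j-1}X(\tau^*_{l,j-1}(\sigma+\delta))]$, and relabel via the recursion $\tau^*_{l+1,j}(i)=\tau^*_{l,j-1}(\sigma+\delta)$ to rewrite the inner sum as $\sum_{l=2}^{j}X(\tau^*_{l,j}(i))$.

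The decisive manipulation is to collapse the nested conditional $g$-expectations. By time consistency $\mathcal{E}_\sigma[\mathcal{E}_{\sigma+\delta}[\cdot]]=\mathcal{E}_\sigma[\cdot]$, and by translation invariance I may absorb the $\mathcal{F}_\sigma$-measurable term $X(\sigma)=X(\tau^*_{1,j}(i))$ into the conditional expectation, giving $X^{(j)}(\sigma)=\mathcal{E}_\sigma[\sum_{l=1}^{j}X(\tau^*_{l,j}(i))]$; a final application of time consistency through the outer $\mathcal{E}_i$ (legitimate since $i\leq\sigma$) produces \eqref{e4} for $j$. In discrete time each stopping-time identity reduces, via the local property, to the corresponding statement on the events $\{\sigma=k\}$, so no continuous-time subtlety intervenes. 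For the spacing I would treat $d=2$ separately, using $\tau^*_{2,j}(i)=\sigma^*_{j-1}(\sigma+\delta)\geq\sigma+\delta=\tau^*_{1,j}(i)+\delta$; and for $d\geq 3$, the identities $\tau^*_{d,j}(i)=\tau^*_{d-1,j-1}(\sigma+\delta)$ and $\tau^*_{d-1,j}(i)=\tau^*_{d-2,j-1}(\sigma+\delta)$ reduce $\tau^*_{d,j}(i)-\tau^*_{d-1,j}(i)\geq\delta$ to exactly the spacing condition for $j-1$ from the induction hypothesis.

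The main obstacle I anticipate is the bookkeeping in the collapse step: keeping the shifted indices of the $\tau^*$-family aligned with the running sum, and rigorously justifying the translation and tower manipulations for stopping-time-indexed $g$-expectations. The latter is precisely where the structural hypothesis that $g$ is independent of $y$ is indispensable, and I would isolate it as the one genuinely non-routine ingredient of the argument.
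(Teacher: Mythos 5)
Your proposal is correct and follows essentially the same route as the paper: the identity \eqref{e4} is obtained by the identical induction on $j$, unfolding $Y^{(j)}(i)=\mathcal{E}_i[X^{(j)}(\sigma^*_j(i))]$ and collapsing the nested conditional $g$-expectations via translation invariance and time consistency (which the paper uses implicitly and you make explicit). The only cosmetic difference is in the spacing claim, where the paper runs an inner induction over $d$ to establish $\tau^*_{d,j}(i)=\sigma^*_{j-d+1}(\tau^*_{d-1,j}(i)+\delta)$, while you reduce the pair $(d,j)$ directly to $(d-1,j-1)$ inside the outer induction on $j$; both rest on the same recursion and the same local-property reduction at stopping times.
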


    \begin{proof}
    	It is obvious that Equation \eqref{e4} holds for the case $j=1$. Assume that the statement holds true for any $j=n$, i.e., 
    	\begin{displaymath}
    		Y^{(n)}(i)=\mathcal{E}_i[\sum_{l=1}^n X(\tau^*_{l,n}(i))].
    	\end{displaymath}
    	By simple calculation, we have
    	\begin{align*}
    		Y^{(n+1)}(i)&=\mathcal{E}_i[X^{(n+1)}(\sigma^*_{n+1}(i))]=\mathcal{E}_i[X(\sigma^*_{n+1}(i))+Y^{(n)}(\sigma^*_{n+1}(i)+\delta)]\\
    		&=\mathcal{E}_i[X(\sigma^*_{n+1}(i))+\mathcal{E}_{\sigma^*_{n+1}(i)+\delta}[\sum_{l=1}^n X(\tau^*_{l,n}(\sigma^*_{n+1}(i)+\delta))]]\\
    		&=\mathcal{E}_i[X(\tau^*_{1,n+1}(i))+\sum_{l=1}^n X(\tau^*_{l+1,n+1}(i))]=\mathcal{E}_i[\sum_{l=1}^{n+1} X(\tau^*_{l,n+1}(i))].
    	\end{align*}
    	
    	Now we claim that for any $d=2,\cdots,j$, $\tau^*_{d,j}(i)=\sigma^*_{j-d+1}(\tau^*_{d-1,j}(i)+\delta)$, which implies that $\tau^*_{d,j}(i)-\tau^*_{d-1,j}(i)\geq \delta$. We will prove this claim by induction over $d$. Indeed, if $d=2$, it is easy to check that, for any $j\geq 2$, 
    	\begin{displaymath}
    		\tau^*_{2,j}(i)=\tau^*_{1,j-1}(\sigma^*_j(i)+\delta)=\sigma^*_{j-1}(\tau^*_{1,j}(i)+\delta).
    	\end{displaymath}
    	Assume that for any $d=k$, we have $\tau^*_{k,j}(i)=\sigma^*_{j-k+1}(\tau^*_{k-1,j}(i)+\delta)$, for any $j\geq k$. For the case $d=k+1$, by simple calculation, we obtain that, for any $j\geq k+1$,
    	\begin{displaymath}
    		\tau^*_{k+1,j}(i)=\tau^*_{k,j-1}(\sigma^*_j(i)+\delta)=\sigma^*_{j-k}(\tau^*_{k-1,j-1}(\sigma^*_j(i)+\delta)+\delta)=\sigma^*_{j-k}(\tau^*_{k,j}(i)+\delta).
    	\end{displaymath}
    	Hence the claim follows.
    \end{proof}
    
    By Lemma \ref{l6}, Equation \eqref{e30} and noting that $Y^{(1)}(i)=Z_1(i)$ for any $i=0,1,\cdots,T$, we have 
    \begin{displaymath}
    Y^{(j)}(i)=Z_j(i), \ j=1,2,\cdots,L, \ i=0,1,\cdots,T.
    \end{displaymath} Therefore, the stopping times $(\tau^*_{1,j}(i),\cdots,\tau^*_{j,j}(i))$ are optimal for the  multiple stopping problem \eqref{e2} according to Lemma \ref{l3}
    \begin{theorem}\label{t1}
    	Let Assumption \ref{a6} hold. Then, for any $i=0,\cdots,T$ and $j=1,\cdots,L$, we have
    	\begin{displaymath}
    		Z_j(i)=Y^{(j)}(i)=\mathcal{E}_i[\sum_{l=1}^j X(\tau^*_{l,j}(i))].
    	\end{displaymath}
    \end{theorem}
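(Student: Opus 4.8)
The plan is to prove the two asserted equalities separately. The second equality, $Y^{(j)}(i)=\mathcal{E}_i[\sum_{l=1}^j X(\tau^*_{l,j}(i))]$, is precisely the content of Lemma \ref{l3}, so nothing new is required there. The entire task therefore reduces to establishing the identity $Z_j(i)=Y^{(j)}(i)$ for all $i=0,\dots,T$ and $j=1,\dots,L$, and the natural route is a double induction: an outer induction on the number of exercise rights $j$, and, for each fixed $j$, an inner backward induction on the time index $i$.

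For the base case $j=1$, I would observe that $X^{(1)}(i)=X(i)+\mathcal{E}_i[Y^{(0)}(i+\delta)]=X(i)$ since $Y^{(0)}\equiv 0$. Hence the auxiliary problem \eqref{e3} defining $Y^{(1)}$ is exactly the single-stopping problem with reward $\{X(i)\}$, which is by definition $Z_1$, so $Y^{(1)}=Z_1$.

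For the inductive step, suppose $Z_{j-1}=Y^{(j-1)}$. I would show that $Z_j$ and $Y^{(j)}$ solve the same backward recursion with the same terminal value and then conclude equality by backward induction on $i$. Substituting the outer induction hypothesis into the recursion of Lemma \ref{l6} gives $Z_j(i)=\max\{X(i)+\mathcal{E}_i[Y^{(j-1)}(i+\delta)],\mathcal{E}_i[Z_j(i+1)]\}=\max\{X^{(j)}(i),\mathcal{E}_i[Z_j(i+1)]\}$, which is exactly the recursion \eqref{e30} satisfied by $Y^{(j)}$. At the terminal time, the convention $\eta(i)=0$ for $i>T$ yields $X^{(j)}(T)=X(T)+\mathcal{E}_T[Y^{(j-1)}(T+\delta)]=X(T)$, so that $Z_j(T)=X(T)=X^{(j)}(T)=Y^{(j)}(T)$. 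Since the common recursion expresses the value at time $i$ explicitly in terms of its value at $i+1$, a descending induction from $i=T$ down to $i=0$ forces $Z_j(i)=Y^{(j)}(i)$ for every $i$, completing the step.

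There is no genuinely hard obstacle here; the statement is essentially an assembly of the previously established lemmas. The only points that demand care are (i) verifying that the two recursions genuinely coincide, which hinges on recognizing $X(i)+\mathcal{E}_i[Z_{j-1}(i+\delta)]=X^{(j)}(i)$ under the induction hypothesis, and (ii) checking the terminal condition $X^{(j)}(T)=X(T)$ through the zero-extension convention. Once the identity $Z_j=Y^{(j)}$ is in hand, the optimality of $(\tau^*_{1,j}(i),\dots,\tau^*_{j,j}(i))$ for \eqref{e2} follows immediately from Lemma \ref{l3}: the spacing constraints $\tau^*_{d,j}(i)-\tau^*_{d-1,j}(i)\geq\delta$ established there show that this vector lies in $\mathcal{S}_i(j,\delta)$ and attains $Y^{(j)}(i)=Z_j(i)$.
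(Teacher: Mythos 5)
Your proposal is correct and follows essentially the same route as the paper: the paper likewise obtains $Z_j=Y^{(j)}$ by combining the recursion of Lemma \ref{l6} with Equation \eqref{e30} and the base case $Y^{(1)}=Z_1$, and then invokes Lemma \ref{l3} for the representation via $(\tau^*_{1,j}(i),\dots,\tau^*_{j,j}(i))$. Your write-up merely makes explicit the double induction and the terminal-condition check that the paper leaves implicit.
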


	

\begin{remark}\label{r1}
	By Theorem \ref{t1} and Equation \eqref{e5}, we may obtain that 
	\begin{displaymath}
		\sigma^*_j(i)=\inf\{i\leq l: X(l)+\mathcal{E}_l[Z_{j-1}(l+\delta)]\geq \mathcal{E}_l[Z_j(l+1)]\}.
	\end{displaymath}
\end{remark}



	\section{Optimal multiple stopping: the continuous-time case}
	
	In this section, we focus on the optimal multiple stopping problem for the continuous-time case. The evaluation $\mathcal{E}[\cdot]$ remains the same with the one in Section 2. We need to propose the following assumption on the payoff process $\{X_t\}_{t\in[0,T]}$.	
	
	\begin{assumption}\label{a1}
		The payoff process $X=\{X_t\}_{t\in[0,T]}$ is an adapted, nonnegative process with $X_T=0$, and it has continuous sample path satisfying the following property:
		\begin{displaymath}
			E[\bar{X}^2]<\infty,
		\end{displaymath}
		where $\bar{X}=\sup_{t\in[0,T]}X_t$.
	\end{assumption}

    For the convenience of analysis, for any process $\xi$, we define $\xi_s=0$, $s> T$ and for any constant $s$, any integers $n<m$ and process $\eta$, we define $\sum_{i=s+m}^{s+n}\eta_i=0$. Now we fix an integer $L\geq 2$ and a positive constant $\delta$ such that $(L-1)\delta\leq T$. Consider a contract that one has at most $L$ rights to exercise under the restriction that the difference between any two successive exercise times is no less than $\delta$. This assumption on the seperation of the exercise times avoids the possibility to exercise the contract simutaneously. Let $\bar{\mathcal{S}}_t(L,\delta)$ be the collection of all vectors of stopping times $\tau^{(L)}(t)=(\tau_1(t),\cdots,\tau_L(t))$ satisfying
    \begin{displaymath}
    	\tau_1(t)\geq t,  \tau_i(t)-\tau_{i-1}(t)\geq \delta \textrm{ for all } i=2, \cdots,L.
    \end{displaymath}
    For the case that $L=1$, $\bar{\mathcal{S}}_t(1,\delta)=\bar{\mathcal{S}}_t$ represents the collection of all stopping times no less than $t$. The value function of the optimal multiple stopping problem is defined by
    \begin{equation}\label{e1}
      Z_t^L=\esssup_{\tau^{(L)}(t)\in\bar{\mathcal{S}}_t(L,\delta)}\mathcal{E}_t[\sum_{j=1}^L X_{\tau_j(t)}].	
    \end{equation}
   In fact, the holder of this contract may not use all the exercise rights due to the refracing period $\delta>0$. She may sacrifice several right in the hope that exercise this contract in the future may be more beneficial.  
    
    \begin{remark}
    	In fact, if the holder has $L$ times exercise rights and $X_t>0$ for any $t\in[0,T)$, she will sacrifice at most $[\frac{L}{2}]$ times rights. Indeed, if $L=2n$ and the holder only exercise $(n-1)$ times rights, let us denote the exercise times by $(\tau_1,\cdots,\tau_{n-1})$. Then only the following three cases may occur: (1) $\tau_1\geq \delta$; (2) there exists some $2\leq k\leq n-1$ such that $\tau_k-\tau_{k-1}\geq 2\delta$; (3) $\tau_1<\delta$ and $\tau_k-\tau_{k-1}<2\delta$ for any $2\leq k\leq n-1$. If (1) happens, set $\tau=\tau_1-\delta$. If (2) happens, set $\tau=\tau_k-\delta$. If (3) happens, set $\tau=\tau_{n-1}+\delta$. Noting that $\delta(L-1)\leq T$, it is easy to check that $\tau=\tau_1+\sum_{k=2}^{n-1}(\tau_k-\tau_{k-1})+\delta<\delta+2\delta(n-2)+\delta=2n\delta-2\delta\leq T$. Then under all these three cases, we have $\mathcal{E}[\sum_{i=1}^{n-1}X_{\tau_i}]<\mathcal{E}[\sum_{i=1}^{n-1}X_{\tau_i}+X_{\tau}]$, which is a contradiction. Similar analysis still holds if $L$ is odd.
    \end{remark}
    
     In order to solve the original problem \eqref{e1}, motivated by the analysis in the discrete-time case, we need to introduce the following auxiliary optimal single  stopping problems:
    \begin{equation}\label{e01}
    	Y^{(0)}\equiv 0 \textrm{ and } Y_t^{(i)}=\widehat{X^{(i)}}_t:=\esssup_{\tau\in \bar{\mathcal{S}}_{t}}\mathcal{E}_t[X^{(i)}_\tau], \textrm{ for } i=1,2,\cdots,L,
    \end{equation}
    where, for each $i=1,2,\cdots,L$, the $i$-exercise reward process $X^{(i)}$ is defined by:
    \begin{displaymath}
    	X_t^{(i)}=\begin{cases}
    	X_t+\mathcal{E}_t[Y^{(i-1)}_{t+\delta}], &t\in[0,T-\delta];\\
    	X_t, &t\in(T-\delta,T].
    	\end{cases}
    \end{displaymath}
    
    \begin{remark}
    	Recall the convention that $\xi_s=0$ for any process $\xi$ when $s> T$. The reward process $X^{(i)}$ can be rewritten as $X_t^{(i)}=X_t+\mathcal{E}_t[Y^{(i-1)}_{t+\delta}]$, $t\in[0,T]$.
    \end{remark} 

    As in Section 2, the optimal stopping times of problem \eqref{e1} is derived from the ones of \eqref{e01}. The existence of optimal stopping times for the auxiliary problems \eqref{e01} requires some regularity of the reward process $X^{(i)}$. In fact, we can show that, the new reward process $X^{(i)}$ preserves all properties of the original reward process $X$. For this purpose, we first introduce the following typical $g$-expectation which dominates $\mathcal{E}[\cdot]$.  For any $\mathcal{F}_T$-measurable and square integrable random variable $\xi$, we define 
    \begin{displaymath}
    	\mathcal{E}^\kappa_t[\xi]=Y^\xi_t,
    \end{displaymath}
    where $(Y^\xi,Z^\xi)$ is the solution of the following BSDE:
    \begin{displaymath}
    	Y_t^\xi=\xi+\int_t^T \kappa|Z_s^\xi|ds-\int_t^T Z^\xi_s dB_s.
    \end{displaymath}
    By the results in \cite{CHMP} and \cite{P97}, $\mathcal{E}^\kappa_t[\cdot]$ is sublinear and dominates $\mathcal{E}_t[\cdot]$, i.e., for any $\xi,\eta\in L^2(P_0)$, we have
    \begin{displaymath}
    	|\mathcal{E}_t[\xi]-\mathcal{E}_t[\eta]|\leq \mathcal{E}^\kappa_t[|\xi-\eta|].
    \end{displaymath}
   
    \begin{lemma}\label{l1}
    	Suppose that $X$ satisfies Assumption \ref{a1}. Then, for all $i=1,2,\cdots,L$, the process $X^{(i)}$ is continuous, and satisfies
    	\begin{displaymath}
    		E[\overline{X^{(i)}}^2]<\infty,
    	\end{displaymath}
    	where $\overline{X^{(i)}}=\sup_{t\in[0,T]}X_t^{(i)}$.
    \end{lemma}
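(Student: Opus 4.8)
The plan is to argue by induction on $i$. For $i=1$ we have $Y^{(0)}\equiv 0$, hence $X^{(1)}_t=X_t+\mathcal{E}_t[0]=X_t$, so the assertion is exactly Assumption \ref{a1}. Assuming the statement for $i-1$, I would establish it for $i$ by treating the integrability bound and the path-continuity of $X^{(i)}_t=X_t+\mathcal{E}_t[Y^{(i-1)}_{t+\delta}]$ separately; since $X$ already has the desired regularity, everything reduces to controlling the term $\mathcal{E}_t[Y^{(i-1)}_{t+\delta}]$.

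For the integrability, first note that $0\le Y^{(i-1)}_{t+\delta}\le \mathcal{E}_{t+\delta}[\overline{X^{(i-1)}}]$, because $X^{(i-1)}_\tau\le \overline{X^{(i-1)}}$ and $\mathcal{E}$ is monotone, so taking the essential supremum over $\tau\ge t+\delta$ in \eqref{e01} preserves the bound. Applying $\mathcal{E}_t$, monotonicity and the time consistency of the $g$-expectation then give
$$0\le \mathcal{E}_t[Y^{(i-1)}_{t+\delta}]\le \mathcal{E}_t\big[\mathcal{E}_{t+\delta}[\overline{X^{(i-1)}}]\big]=\mathcal{E}_t[\overline{X^{(i-1)}}],$$
whence $\overline{X^{(i)}}\le \overline{X}+\sup_{t\in[0,T]}\mathcal{E}_t[\overline{X^{(i-1)}}]$. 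The second term is the running supremum of the solution of the BSDE with terminal value $\overline{X^{(i-1)}}$, so the standard $L^2$ estimate for BSDEs yields $E[\sup_t|\mathcal{E}_t[\overline{X^{(i-1)}}]|^2]\le CE[\overline{X^{(i-1)}}^2]$, which is finite by the induction hypothesis. Hence $E[\overline{X^{(i)}}^2]<\infty$, and nonnegativity, adaptedness and $X^{(i)}_T=0$ follow from the same representation.

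The continuity is the heart of the matter. By the induction hypothesis $X^{(i-1)}$ is continuous and $L^2$-dominated, so the value process $Y^{(i-1)}$ of the single optimal stopping problem \eqref{e01} coincides with the first component of the solution of the reflected BSDE with obstacle $X^{(i-1)}$; in particular $Y^{(i-1)}$ has continuous paths and $\overline{Y^{(i-1)}}\le \sup_t\mathcal{E}_t[\overline{X^{(i-1)}}]\in L^2$. To analyse $\psi(t):=\mathcal{E}_t[Y^{(i-1)}_{t+\delta}]$, I would fix $t$ and, for $t'\to t$, decompose
$$\psi(t')-\psi(t)=\big(\mathcal{E}_{t'}[Y^{(i-1)}_{t'+\delta}]-\mathcal{E}_{t'}[Y^{(i-1)}_{t+\delta}]\big)+\big(\mathcal{E}_{t'}[Y^{(i-1)}_{t+\delta}]-\mathcal{E}_t[Y^{(i-1)}_{t+\delta}]\big).$$
The second bracket tends to $0$ because, for the fixed random variable $\zeta=Y^{(i-1)}_{t+\delta}\in L^2$, the process $r\mapsto \mathcal{E}_r[\zeta]$ is a continuous BSDE solution, i.e.\ the $g$-expectation is continuous in time. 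For the first bracket I would use the domination by $\mathcal{E}^\kappa$: it is bounded by $\mathcal{E}^\kappa_{t'}[|Y^{(i-1)}_{t'+\delta}-Y^{(i-1)}_{t+\delta}|]$, and since $\mathcal{E}^\kappa$ is itself a $g$-expectation with Lipschitz generator $\kappa|z|$, the $L^2$ estimate for BSDEs gives $E[\sup_r|\mathcal{E}^\kappa_r[|Y^{(i-1)}_{t'+\delta}-Y^{(i-1)}_{t+\delta}|]|^2]\le CE[|Y^{(i-1)}_{t'+\delta}-Y^{(i-1)}_{t+\delta}|^2]$, which converges to $0$ as $t'\to t$ by continuity of $Y^{(i-1)}$ together with dominated convergence (with dominating variable $2\overline{Y^{(i-1)}}\in L^2$).

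The main obstacle is to upgrade these estimates from convergence in $L^2$ to genuine sample-path continuity of $\psi$, since both the conditioning time and the terminal random variable vary with $t$. The clean way around this is to set $\Psi_t(s):=\mathcal{E}_s[Y^{(i-1)}_{t+\delta}]$, which for each fixed $t$ is a continuous process in $s$, and to observe that the uniform estimate above reads $E[\sup_s|\Psi_{t'}(s)-\Psi_t(s)|^2]\le CE[|Y^{(i-1)}_{t'+\delta}-Y^{(i-1)}_{t+\delta}|^2]\to 0$, so that $t\mapsto \Psi_t$ is continuous into $L^2(\Omega;C([0,T]))$. Writing $\psi(t')-\psi(t)=(\Psi_{t'}(t')-\Psi_t(t'))+(\Psi_t(t')-\Psi_t(t))$ and controlling the first difference by the uniform estimate and the second by the path-continuity of $\Psi_t$ produces a continuous modification of $\psi$, which we take as $X^{(i)}-X$. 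Carrying out this last step rigorously, namely extracting the continuous modification and verifying that the obstacle $X^{(i)}$ fed into the reflected BSDE for $Y^{(i)}$ is exactly this continuous version, is the only genuinely delicate point; the remainder is induction bookkeeping.
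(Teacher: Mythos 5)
Your induction scheme, the integrability estimate, and the two-term decomposition of $\mathcal{E}_{t'}[Y^{(i-1)}_{t'+\delta}]-\mathcal{E}_t[Y^{(i-1)}_{t+\delta}]$ all match the paper, and the half of the argument dealing with $E[\overline{X^{(i)}}^2]<\infty$ is complete and essentially identical to the paper's. The continuity half, however, has a genuine gap, and you have located it yourself without closing it: every estimate you give for the term $\mathcal{E}_{t'}[Y^{(i-1)}_{t'+\delta}]-\mathcal{E}_{t'}[Y^{(i-1)}_{t+\delta}]$ is an $L^2$ estimate, and continuity of $t\mapsto \Psi_t$ into $L^2(\Omega;C([0,T]))$ only yields $L^2$-continuity of $\psi(t)=\Psi_t(t)$ in $t$. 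Continuity in $L^2$ (or in probability) of a process does not by itself produce a continuous modification, and there is no Kolmogorov-type modulus available here since $Y^{(i-1)}$ is merely continuous, not H\"older. Moreover, what the induction actually requires is that the \emph{specific} process $X^{(i)}_t=X_t+\mathcal{E}_t[Y^{(i-1)}_{t+\delta}]$, defined pointwise through the BSDE solutions, has continuous paths, because it is this process that is fed as the obstacle into the reflected BSDE for $Y^{(i)}$ at the next step. So the step you defer as ``the only genuinely delicate point'' is in fact the whole content of the continuity claim.

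The paper closes it with a device your plan is missing. It suffices to prove left-continuity along sequences $t_n\uparrow t$; instead of estimating $\mathcal{E}^\kappa_{t_n}[|Y_{t+\delta}-Y_{t_n+\delta}|]$, in which both the conditioning time and the terminal variable move with $n$, one bounds it for $n\ge m$ by $\mathcal{E}^\kappa_{t_n}[\xi_m]$ with the $n$-independent majorant $\xi_m=\sup_{r\ge m}|Y_{t+\delta}-Y_{t_r+\delta}|\in L^2$. Then $s\mapsto\mathcal{E}^\kappa_s[\xi_m]$ is a single continuous BSDE solution, so $\mathcal{E}^\kappa_{t_n}[\xi_m]\to\mathcal{E}^\kappa_t[\xi_m]$ almost surely as $n\to\infty$; it remains to show $\mathcal{E}^\kappa_t[\xi_m]\to 0$ a.s.\ as $m\to\infty$. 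For this the paper uses the reflected BSDE representation of $Y^{(i-1)}$ to dominate $\xi_m$ by the three explicit terms $\int_{t_m+\delta}^{t+\delta}|g(s,Z_s)|ds$, $\sup_{r\ge m}|\int_{t_r+\delta}^{t+\delta}Z_s dB_s|$ and $|K_{t+\delta}-K_{t_m+\delta}|$, each of which vanishes by the conditional estimate for BSDEs, the B-D-G inequality and dominated convergence. This monotone-majorant argument, which keeps everything at the level of almost-sure convergence, is the missing idea; without it (or an equivalent a.s.\ device) your proof does not go through.
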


    \begin{proof}
    	We need to prove this result by induction. It is easy to check that the statement holds for the case $i=1$ since $X^{(1)}=X$. Now we assume that for $i\geq 2$, the process $X^{(i-1)}$ is continuous, and satisfies $E[\overline{X^{(i-1)}}^2]<\infty$. We first show that $\overline{X^{(i)}}$ is square integrable. Note that 
    	\begin{align*}
    		&Y^{(i-1)}_t\leq \mathcal{E}_t[\overline{X^{(i-1)}}],\\
    		&\overline{X^{(i)}}\leq \bar{X}+\sup_{t\in[0,T]}\mathcal{E}_t[Y^{(i-1)}_{t+\delta}]\leq \bar{X}+\sup_{t\in[0,T]}\mathcal{E}_t[\sup_{s\in[0,T]} Y^{(i-1)}_s].
    	\end{align*} 
    	By the estimate for BSDE, we have
    	\begin{displaymath}
    		E[\sup_{t\in[0,T]} (Y_t^{(i-1)})^2]\leq E[\sup_{t\in[0,T]}|\mathcal{E}_t[\overline{X^{(i-1)}}]|^2]\leq C E[\overline{X^{(i-1)}}^2]<\infty.
    	\end{displaymath}
    	It follows that 
    	\begin{displaymath}
    		E[\overline{X^{(i)}}^2]\leq C\{E[\bar{X}^2]+E[\sup_{t\in[0,T]}(\mathcal{E}_t[\sup_{s\in[0,T]} Y^{(i-1)}_s])^2]\}\leq C\{E[\bar{X}^2]+E[\sup_{t\in[0,T]} (Y_t^{(i-1)})^2]\}<\infty,
    	\end{displaymath}
    	where we have used the estimate for BSDE again in the second inequality. 
    	
    	Now we prove the process $X^{(i)}$ is continuous, which is equivalent to show that $\{\mathcal{E}_t[Y^{(i-1)}_{t+\delta}]\}_{t\in[0,T]}$ is continuous. For simplicity, we omit the superscript $(i-1)$. By Corollary D.1 in \cite{CR}, there exists a pair of processes $(Z,K)$ such that $(Y,Z,K)$ is the solution of reflected BSDE with terminal value $X_T$, generator $g$ and obstacle process $X$, i.e., $(Y,Z,K)$ satisfies the following conditions
    	\begin{description}
    		\item[(1)] $Y_t=X_T+\int_t^T g(s,Z_s)ds-\int_t^T Z_sdB_s+K_T-K_t$;
    		\item[(2)] $Y_t\geq X_t$, $t\in[0,T]$ and $\int_0^T (Y_t-X_t)dK_t=0$,
    	\end{description}
    where $Z$ is an adapted process with $E[\int_0^T Z_t^2 dt]<\infty$, $K$ is continuous and increasing, $K_0=0$ and $K_T$ is square integrable. Therefore, the process $Y$ is continuous. It is sufficient to prove that the process $\{\mathcal{E}_t[Y_{t+\delta}]\}_{t\in[0,T]}$ is left continuous. Let $\{t_n\}_{n\in\mathbb{N}}$ be a sequence of times such that $t_n\uparrow t$. It is easy to check that 
    \begin{align*}
    	|\mathcal{E}_t[Y_{t+\delta}]-\mathcal{E}_{t_n}[Y_{t_n+\delta}]|&\leq |\mathcal{E}_t[Y_{t+\delta}]-\mathcal{E}_{t_n}[Y_{t+\delta}]|+|\mathcal{E}_{t_n}[Y_{t+\delta}]-\mathcal{E}_{t_n}[Y_{t_n+\delta}]|\\
    	&\leq |\mathcal{E}_t[Y_{t+\delta}]-\mathcal{E}_{t_n}[Y_{t+\delta}]|+\mathcal{E}^\kappa_{t_n}[|Y_{t+\delta}-Y_{t_n+\delta}|]\\
    	&\leq |\mathcal{E}_t[Y_{t+\delta}]-\mathcal{E}_{t_n}[Y_{t+\delta}]|+\mathcal{E}^\kappa_{t_n}[\xi_m]:=I+II,
    \end{align*}
    where $\xi_m=\sup_{r\geq m}|Y_{t+\delta}-Y_{t_r+\delta}|$ with $m\leq n$. Note that $\{\mathcal{E}_s[Y_{t+\delta}]\}_{s\in[0,T]}$ can be viewed as the first component of the solution to the BSDE with terminal value $Y_{t+\delta}$ and generator $g$. Hence, it is continuous, which implies that $\lim_{n\rightarrow\infty} I=0$. By the previous analysis, we have for any $m\leq n$,
    \begin{displaymath}
    	E[|\xi_m|^2]\leq C E[\sup_{t\in[0,T]}Y_t^2]<\infty.
    \end{displaymath}
    Therefore, $\{\mathcal{E}^\kappa_s[\xi_m]\}_{s\in[0,T]}$ coincides with the first component of the solution to the BSDE with terminal value $\xi_m$ and generator $g(t,z)=\kappa|z|$. It follows from the continuity of the solution that $\lim_{n\rightarrow\infty}II=\mathcal{E}^\kappa_t[\xi_m]$. It remains to prove that $\lim_{m\rightarrow\infty}\mathcal{E}^\kappa_t[\xi_m]=0$. Indeed, we have
    \begin{align*}
    	\mathcal{E}^\kappa_t[\xi_m]&\leq \mathcal{E}^\kappa_t[\int_{t_m+\delta}^{t+\delta}|g(s,Z_s)|ds]+\mathcal{E}^\kappa_t[\sup_{r\geq m}|\int_{t_r+\delta}^{t+\delta} Z_sdB_s|]+\mathcal{E}^\kappa_t[\sup_{r\geq m}|K_{t+\delta}-K_{t_r+\delta}|]\\
    	&\leq C\{(E_t[(\int_{t_m+\delta}^{t+\delta}|g(s,Z_s)|ds)^2])^{1/2}+(E_t[\sup_{r\geq m}|\int_{t_r+\delta}^{t+\delta} Z_sdB_s|^2])^{1/2}+(E_t[|K_{t+\delta}-K_{t_m+\delta}|^2])^{1/2}\},
    \end{align*}
    where we use the estimate for BSDE and the fact that $K$ is increasing in the second inequality. By the B-D-G inequality and dominated convergence theorem, we get the desired result. The proof is complete.
    \end{proof}

    Recall that a progressively measurable process $\{\xi_t\}_{t\in[0,T]}$ is said to be a $g$-supermartingale on $[0,T]$ in strong sense if, for each stopping time $\tau\leq T$, $E[|\xi_\tau|^2]<\infty$, and for all stopping time $\sigma\leq \tau$, we have $\mathcal{E}_\sigma[\xi_\tau]\leq \xi_\sigma$. A progessively measurable process $\{\xi_t\}_{t\in[0,T]}$ is said to be a $g$-supermartingale on $[0,T]$ in weak sense if, for each deterministic time $t\leq T$, $E[|\xi_t|^2]<\infty$, and for all deterministic time $s\leq t$, we have $\mathcal{E}_s[\xi_t]\leq \xi_s$. It is natural to conclude that a $g$-supermartingale in strong sense is a $g$-supermartingale in weak sense. Furthermore, a $g$-supermartingale in weak sense is also a $g$-supermartingale in strong sense provided that it is right-continuous. For more details, we may refer to the papers \cite{CP} and \cite{P99}. Note that in the proof of Lemma \ref{l1}, the process $\{Y^{(i)}_t\}_{t\in[0,T]}$ is continuous. Then, by Theorem 3.1 in \cite{CR}, we may conclude that for any $i=1,\cdots,L$,
    \begin{description}
    	\item[(1)] $Y^{(i)}$ is the smallest $g$-supermartingale in strong sense dominating the reward process $X^{(i)}$;
    	\item[(2)] $\sigma_i^*(t)=\inf\{s\geq t: Y^{(i)}_s=X^{(i)}_s\}$ is an optimal stopping time;
    	\item[(3)] the value function stopped at time $\sigma_i^*(t)$, $\{Y_{s\wedge \sigma_i^*(t)}\}_{s\in[t,T]}$ is a $g$-martingale in strong sense.
    \end{description}

    Now consider the following stopping times: $\tau^*_{1,L}(t)=\inf\{s\geq t: Y_s^{(L)}=X_s^{(L)}\}$ and for $i=2,\cdots,L$, set
    \begin{equation}\label{e31}\begin{split}
    	\tau_{i,L}^*(t)=&\inf\{s\geq \delta+\tau_{i-1,L}^*(t):Y^{(L-i+1)}_s=X^{(L-i+1)}_s\}I_{\{\delta+\tau^*_{i-1,L}(t)\leq T\}}\\ &+(\delta+\tau^*_{i-1,L}(t))I_{\{\delta+\tau^*_{i-1,L}(t)> T\}}.
    	\end{split}
    \end{equation}
    It is easy to check that ${\tau^{(L)*}(t)}=(\tau^*_{1,L}(t),\cdots,\tau_{L,L}^*(t))\in \bar{\mathcal{S}}_t(L,\delta)$ and the representation similar with Equation \eqref{e} still holds. We then show that ${\tau^{(L)*}(t)}$ is optimal for the multiple stopping problem \eqref{e1}. The proof is motivated by the one of Theorem 1 in \cite{CT}.
    
    \begin{theorem}\label{t}
    	If the process $X$ satisfies Assumption \ref{a1}, then we have for any $t\in[0,T]$
    	\begin{displaymath}
    		Z^L_t=Y_t^{(L)}=\mathcal{E}_t[\sum_{i=1}^L X_{\tau_{i,L}^*(t)}].
    	\end{displaymath}
    \end{theorem}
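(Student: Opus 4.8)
The plan is to establish the two inequalities $Y_t^{(L)}\le Z_t^L$ and $Z_t^L\le Y_t^{(L)}$ separately, in direct analogy with the discrete-time argument (Lemma \ref{l3} and Theorem \ref{t1}), the only genuinely new ingredient being that all manipulations now take place at random (stopping) times rather than at integer times. Throughout I would rely on three structural properties of the $g$-expectation: its monotonicity, its time consistency (the tower property $\mathcal{E}_s[\mathcal{E}_\rho[\xi]]=\mathcal{E}_s[\xi]$ for stopping times $s\le\rho$), and its translation invariance $\mathcal{E}_\rho[\eta+\xi]=\eta+\mathcal{E}_\rho[\xi]$ for $\mathcal{F}_\rho$-measurable $\eta$, the latter being available because the generator $g(s,z)$ does not depend on $y$.

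For the easy inequality I would first prove the representation $Y_t^{(L)}=\mathcal{E}_t[\sum_{l=1}^L X_{\tau^*_{l,L}(t)}]$, the continuous-time counterpart of \eqref{e4}, by induction on $L$. The base case $L=1$ is exactly the optimality of $\sigma_1^*(t)=\tau^*_{1,L}(t)$ for the single-stopping problem \eqref{e01}, which is guaranteed by fact (2) following the supermartingale discussion. For the inductive step I would write, using optimality of $\sigma_L^*(t)$ and the definition of $X^{(L)}$, the chain $Y_t^{(L)}=\mathcal{E}_t[X^{(L)}_{\sigma_L^*(t)}]=\mathcal{E}_t[X_{\sigma_L^*(t)}+\mathcal{E}_{\sigma_L^*(t)}[Y^{(L-1)}_{\sigma_L^*(t)+\delta}]]$, collapse the inner expectation by the tower property, and then substitute the induction hypothesis applied at the stopping time $\sigma_L^*(t)+\delta$. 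The bookkeeping that matches the indices, namely the identity $\tau^*_{l+1,L}(t)=\tau^*_{l,L-1}(\sigma_L^*(t)+\delta)$, is proved by the same nested induction on the exercise index as in Lemma \ref{l3}, while translation invariance lets me absorb the $\mathcal{F}$-measurable term $X_{\tau^*_{1,L}(t)}$; the event $\{\sigma_L^*(t)+\delta>T\}$ causes no trouble since $X_T=0$ and $X_s=0$ for $s>T$ by convention. Because the resulting vector $\tau^{(L)*}(t)$ lies in $\bar{\mathcal{S}}_t(L,\delta)$ (as noted after \eqref{e31}), this representation immediately yields $Y_t^{(L)}\le Z_t^L$.

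For the reverse inequality $Z_t^L\le Y_t^{(L)}$ I would argue by induction on $L$ as well, showing that every admissible vector is dominated; the base case $L=1$ is the definition of the single-stopping value. Fix $\tau^{(L)}(t)=(\tau_1,\dots,\tau_L)\in\bar{\mathcal{S}}_t(L,\delta)$; then $(\tau_2,\dots,\tau_L)\in\bar{\mathcal{S}}_{\tau_1+\delta}(L-1,\delta)$, so the induction hypothesis applied at the stopping time $\tau_1+\delta$ gives $\mathcal{E}_{\tau_1+\delta}[\sum_{j=2}^L X_{\tau_j}]\le Y^{(L-1)}_{\tau_1+\delta}$. Conditioning at $\tau_1+\delta$ via the tower property, using translation invariance to pull out the $\mathcal{F}_{\tau_1+\delta}$-measurable term $X_{\tau_1}$ and then monotonicity, I obtain $\mathcal{E}_t[\sum_{j=1}^L X_{\tau_j}]\le\mathcal{E}_t[X_{\tau_1}+Y^{(L-1)}_{\tau_1+\delta}]$; a further conditioning at $\tau_1$ together with translation invariance collapses the right-hand side to $\mathcal{E}_t[X_{\tau_1}+\mathcal{E}_{\tau_1}[Y^{(L-1)}_{\tau_1+\delta}]]=\mathcal{E}_t[X^{(L)}_{\tau_1}]\le Y_t^{(L)}$, since $\tau_1\in\bar{\mathcal{S}}_t$. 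Taking the essential supremum over all admissible vectors finishes the step, and combined with the previous paragraph gives all three equalities of the theorem.

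The step I expect to be the main obstacle is making the stopping-time version of this induction rigorous: I need that the value process $Y^{(L-1)}$, evaluated at the random time $\tau_1+\delta$, genuinely equals the optimal single-stopping value $\esssup_{\tau\in\bar{\mathcal{S}}_{\tau_1+\delta}}\mathcal{E}_{\tau_1+\delta}[X^{(L-1)}_\tau]$ started from that time, and likewise that the multiple-stopping value admits such a dynamic, stopping-time-indexed formulation so that the induction hypothesis can legitimately be invoked at $\tau_1+\delta$. This is where the regularity secured in Lemma \ref{l1} is essential: the continuity and square-integrability of $X^{(L-1)}$ allow the optimal stopping theory of Theorem 3.1 in \cite{CR} to apply, identifying $Y^{(L-1)}$ as the smallest strong $g$-supermartingale dominating $X^{(L-1)}$ and thereby furnishing exactly the optional-sampling evaluation at stopping times that the induction consumes — a feature that was automatic in the finite discrete-time setting of Theorem \ref{t1} but which here must be earned through the continuity of the new reward family.
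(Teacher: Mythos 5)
Your argument is correct, but it is not the proof the paper attaches to Theorem \ref{t}; it coincides instead with the paper's own \emph{alternative} proof (the unnumbered theorem that follows, asserting $Z^j_t=Y^{(j)}_t$ by induction on the number of exercise rights). The paper's primary proof follows Carmona--Touzi: for an arbitrary admissible vector $(\tau_1,\dots,\tau_L)$ it first establishes, by backward induction through the reward hierarchy, the pathwise-at-stopping-times inequality $X^{(n)}_{\tau_{L-n+1}}\geq \mathcal{E}_{\tau_{L-n+1}}[\sum_{i=L-n+1}^L X_{\tau_i}]$, takes $n=L$ to get $\mathcal{E}[\sum_j X_{\tau_j}]\leq \mathcal{E}[X^{(L)}_{\tau_1}]\leq Y^{(L)}_0$, and then obtains the lower bound $Y^{(L)}_0\leq\mathcal{E}[\sum_j X_{\tau^*_{j,L}}]$ not via your inductive representation formula but by repeatedly invoking the $g$-martingale property of the stopped Snell envelope $\{Y^{(L-1)}_{s\wedge\tau^*_{2,L}}\}$ to unwind $Y^{(L)}_0$ one exercise right at a time. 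The trade-off is that the Carmona--Touzi route works entirely at $t=0$ with a single fixed competitor vector and leans on the martingale property of the stopped value process (item (3) of the Cheng--Riedel facts), whereas your route requires the dynamic-programming identities $Y^{(j)}_s=\esssup_{\tau\in\bar{\mathcal{S}}_s}\mathcal{E}_s[X^{(j)}_\tau]$ and $Z^j_s\leq Y^{(j)}_s$ to be available at \emph{stopping times} $s$ — the obstacle you correctly identify in your last paragraph, and which the paper also only addresses by appeal to the regularity of Lemma \ref{l1} and the phrase ``similar with the proof of Lemma \ref{l3}.'' Both arguments are sound; yours has the advantage of yielding the whole family of identities $Z^j_t=Y^{(j)}_t$ for $j=1,\dots,L$ simultaneously, while the paper's primary proof more directly exhibits the optimality of the concrete vector $\tau^{(L)*}(t)$.
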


   \begin{proof}
   	We only proof the case that $t=0$, since the other cases can be proved in a similar way. For simplicity, we omit $t$ in the parentheses.  Let $(\tau_1,\cdots,\tau_L)\in\bar{\mathcal{S}}_0(L,\delta)$. We first claim that, for any $n=1,2,\cdots,L$, the following statement holds:
   	\begin{equation}\label{a2}
   		X^{(n)}_{\tau_{L-n+1}}\geq \mathcal{E}_{\tau_{L-n+1}}[\sum_{i=L-n+1}^L X_{\tau_i}]
   	\end{equation}
   	Note that $X^{(1)}\geq X$, which implies that Equation \eqref{a2} holds for $n=1$. Assume that Equation \eqref{a2} holds for $n=k$. On the set $\{\tau_{L-k}\leq T-\delta\}$, we obtain that
   	\begin{align*}
   		X^{(k+1)}_{\tau_{L-k}}&=\mathcal{E}_{\tau_{L-k}}[X_{\tau_{L-k}}+Y_{\tau_{L-k}+\delta}^{(k)}]\geq \mathcal{E}_{\tau_{L-k}}[X_{\tau_{L-k}}+\mathcal{E}_{\tau_{L-k}+\delta}[X^{(k)}_{\tau_{L-k+1}}]]\\
   		&\geq \mathcal{E}_{\tau_{L-k}}[X_{\tau_{L-k}}+\mathcal{E}_{\tau_{L-k}+\delta}[\mathcal{E}_{\tau_{L-k+1}}[\sum_{i=L-k+1}^L X_{\tau_i}]]]=\mathcal{E}_{\tau_{L-k}}[\sum_{i=L-k}^L X_{\tau_i}].
   	\end{align*}
   	On the set $\{\tau_{L-k}>T-\delta\}$, then for any $i\geq L-k+1$, we have $\tau_i>T$, which implies that $X_{\tau_i}=0$. It follows that
   	\begin{displaymath}
   		\mathcal{E}_{\tau_{L-k}}[\sum_{i=L-k}^L X_{\tau_i}]= X_{\tau_{L-k}}=X^{(k+1)}_{\tau_{L-k}}.
   	\end{displaymath} 
   	Hence, the claims holds.
   	
   	Now, for simplicity, we set $\bar{\tau}_n:=\tau_{L-n+1}$ and $X^{(0)}\equiv 0$. By Equation \eqref{a2}, it is easy for us to check that, for any $n=0,1,\cdots,L$, we have
   	\begin{equation}\label{a3}
   		\mathcal{E}[\sum_{j=1}^L X_{\tau_j}]\leq \mathcal{E}[X^{(n)}_{\bar{\tau}_n}+\sum_{j=n+1}^L X_{\bar{\tau}_j}].
   	\end{equation}
   	Indeed, if $n=0$, then, two sides of the above equation are equal. For the case that $n= 1,\cdots,L$, by simple calculation and Equation \eqref{a2}, we obtain that 
   	\begin{displaymath}
   		\mathcal{E}[\sum_{j=1}^L X_{\tau_j}]=\mathcal{E}[\sum_{i=1}^{L-n} X_{\tau_i}+\mathcal{E}_{\tau_{L-n+1}}[\sum_{i=L-n+1}^L X_{\tau_i}]]\leq \mathcal{E}[X^{(n)}_{\bar{\tau}_n}+\sum_{j=n+1}^L X_{\bar{\tau}_j}].
   	\end{displaymath}
   	Taking $n=L$, together with the classical optimal stopping problem and the definition of $X^{(L)}$, yields that
   	\begin{equation}\label{a4}
   		\mathcal{E}[\sum_{j=1}^L X_{\tau_j}]\leq \mathcal{E}[X^{(L)}_{\tau_1}]\leq Y_0^{(L)}=\mathcal{E}[X^{(L)}_{\tau^*_{1,L}}]=\mathcal{E}[X_{\tau^*_{1,L}}+\mathcal{E}_{\tau^*_{1,L}}[Y_{\delta+\tau^*_{1,L}}^{(L-1)}]]=\mathcal{E}[X_{\tau^*_{1,L}}+Y_{\delta+\tau^*_{1,L}}^{(L-1)}].
   	\end{equation}
   	Note that the stopped process $\{Y^{(L-1)}_{t\wedge \tau^*_{2,L}};t\geq \delta+\tau^*_{1,L}\}$ is a $g$-martingale, which implies that 
   	\begin{equation}\label{a5}
   		Y_{\delta+\tau_{1,L}^*}^{L-1}=\mathcal{E}_{\delta+\tau^*_{1,L}}[Y^{(L-1)}_{\tau^*_{2,L}}]=\mathcal{E}_{\delta+\tau^*_{1,L}}[X^{(L-1)}_{\tau^*_{2,L}}]=\mathcal{E}_{\delta+\tau^*_{1,L}}[X_{\tau^*_{2,L}}+\mathcal{E}_{\tau^*_{2,L}}[Y^{(L-2)}_{\delta+\tau^*_{2,L}}]].
   	\end{equation}
   	Combining Equation \eqref{a4} and \eqref{a5}, we get that 
   	\begin{displaymath}
   		\mathcal{E}[\sum_{j=1}^L X_{\tau_j}]\leq Y_0^{(L)}\leq \mathcal{E}[X_{\tau^*_{1,L}}+X_{\tau^*_{2,L}}+Y^{(L-2)}_{\delta+\tau^*_{2,L}}].
   	\end{displaymath}
   	Repeating the above analysis, we finally obtain that 
   	\begin{displaymath}
   		\mathcal{E}[\sum_{j=1}^L X_{\tau_j}]\leq Y_0^{(L)}\leq\mathcal{E}[\sum_{j=1}^L X_{\tau^*_{j,L}}].
   	\end{displaymath}
   	Since $(\tau_1,\cdots,\tau_L)\in \bar{\mathcal{S}}_0(L,\delta)$ is chosen arbitrarily, we obtain the desired result.
   \end{proof}

   We then give an alternative approach for the proof of Theorem \ref{t} based on the result obtained in the discrete-time case. Note that the stopping times defined in \eqref{e31} satisfy the recursive condition, that is, for any $j=2,\cdots,L$ 
   \begin{displaymath}
   	\tau_{1,j}^*(t)=\sigma_j^*(t),\  \tau_{d,j}^*(t)=\sigma^*_{j-d+1}(\tau^*_{d-1,j}(t)+\delta), \ d=2,\cdots,j.
   \end{displaymath}
   Similar with the proof of Lemma \ref{l3}, we have $\tau_{d,j}^*(t)=\tau_{d-1,j-1}^*(\sigma^*_j(t)+\delta)$ for any $d=2,\cdots,j$ and for any $j=2,\cdots,L$, $t\in[0,T]$
   \begin{displaymath}
   	Y^{(j)}_t=\mathcal{E}_t[\sum_{i=1}^j X_{\tau^*_{i,j}(t)}].
   \end{displaymath}
    Applying this result, Theorem \ref{t} can be derived easily from the fact that $Z^L_t$ coincides with $Y^{(L)}_t$ for any $t\in[0,T]$.
    
    \begin{theorem}
    	Let Assumption \ref{a1} hold. Then, for any $t\in[0,T]$ and $j=1,\cdots,L$, we have
    	\begin{displaymath}
    	Z^j_t=Y^{(j)}_t.
    	\end{displaymath}
    \end{theorem}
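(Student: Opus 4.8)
The plan is to exploit the fact that the $j$-right problem $Z^j$ is, structurally, a verbatim copy of the $L$-right problem $Z^L$ with $L$ replaced by $j$, and that the auxiliary single-stopping value functions $Y^{(0)},\dots,Y^{(j)}$ constructed in \eqref{e01} depend only on the reward $X$ and on the index, not on the total number of rights $L$. Consequently, once $j\le L$ is fixed, every object and regularity fact needed to run the proof of Theorem \ref{t} at level $j$ is already in hand: the continuity and square-integrability of $X^{(i)}$ for $i\le j$ from Lemma \ref{l1}, the optimality of $\sigma^*_i$, and the $g$-martingale property of the stopped value process recorded in the three properties following Lemma \ref{l1}. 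I would therefore fix $j\in\{1,\dots,L\}$ and $t\in[0,T]$ and prove the two inequalities $Y^{(j)}_t\le Z^j_t$ and $Z^j_t\le Y^{(j)}_t$ separately.

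For $Y^{(j)}_t\le Z^j_t$, I would invoke the representation $Y^{(j)}_t=\mathcal{E}_t[\sum_{i=1}^j X_{\tau^*_{i,j}(t)}]$ together with the recursive identity $\tau^*_{d,j}(t)=\sigma^*_{j-d+1}(\tau^*_{d-1,j}(t)+\delta)$, both recorded just above the statement. Since each $\sigma^*_{\cdot}(s)\ge s$, this identity gives $\tau^*_{1,j}(t)\ge t$ and $\tau^*_{d,j}(t)-\tau^*_{d-1,j}(t)\ge\delta$, so the vector $(\tau^*_{1,j}(t),\dots,\tau^*_{j,j}(t))$ belongs to $\bar{\mathcal{S}}_t(j,\delta)$. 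Being one admissible candidate in the essential supremum defining $Z^j_t$ in \eqref{e1} with $L$ replaced by $j$, it yields $Z^j_t\ge\mathcal{E}_t[\sum_{i=1}^j X_{\tau^*_{i,j}(t)}]=Y^{(j)}_t$.

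For the reverse inequality I would reproduce, at level $j$, the induction in the proof of Theorem \ref{t}. Fixing an arbitrary $(\tau_1,\dots,\tau_j)\in\bar{\mathcal{S}}_t(j,\delta)$, I would establish by induction on $n=1,\dots,j$ the dominance estimate $X^{(n)}_{\tau_{j-n+1}}\ge\mathcal{E}_{\tau_{j-n+1}}[\sum_{i=j-n+1}^j X_{\tau_i}]$, exactly as in \eqref{a2} but with $L$ replaced by $j$; the passage from $n=k$ to $n=k+1$ uses only the definition of $X^{(k+1)}$, the time consistency of $\mathcal{E}$, and the convention $X_s=0$ for $s>T$, none of which refers to $L$. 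Peeling off the rewards one at a time as in \eqref{a3}--\eqref{a4} and using the $g$-martingale property of the stopped value process then gives $\mathcal{E}_t[\sum_{i=1}^j X_{\tau_i}]\le Y^{(j)}_t$, and taking the essential supremum over $\bar{\mathcal{S}}_t(j,\delta)$ yields $Z^j_t\le Y^{(j)}_t$. Because Theorem \ref{t} already furnishes the complete argument at level $L$, the main (and essentially only) obstacle is the bookkeeping needed to confirm that nothing in that argument secretly used $j=L$ — that is, that the family $\{Y^{(i)}\}_{i\le j}$ and the stopping times $\tau^*_{\cdot,j}$ are well defined and admissible at every intermediate level; this holds precisely because the constructions in \eqref{e01} and \eqref{e31} are purely recursive in the index.
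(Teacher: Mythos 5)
Your proposal is correct, and the lower bound $Y^{(j)}_t\le Z^j_t$ is argued exactly as in the paper: the vector $(\tau^*_{1,j}(t),\dots,\tau^*_{j,j}(t))$ is admissible in $\bar{\mathcal{S}}_t(j,\delta)$ and the representation $Y^{(j)}_t=\mathcal{E}_t[\sum_{i=1}^j X_{\tau^*_{i,j}(t)}]$ recorded just before the statement does the rest. For the reverse inequality, however, you take a genuinely different route from the paper's own proof of this theorem. You propose to re-run the proof of Theorem \ref{t} at level $j$ — the induction on $n$ establishing the dominance estimate $X^{(n)}_{\tau_{j-n+1}}\ge\mathcal{E}_{\tau_{j-n+1}}[\sum_{i=j-n+1}^j X_{\tau_i}]$, followed by the peeling argument of \eqref{a3}--\eqref{a4} — and your key observation that nothing in that argument uses $j=L$ (the recursions \eqref{e01} and \eqref{e31} are purely indexed) is accurate. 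The paper instead proves $Z^j_t\le Y^{(j)}_t$ by induction over the number of exercise rights $j$: it splits off the first stopping time, applies time consistency to write $\mathcal{E}_t[\sum_{l=1}^{n+1}X_{\tau_l(t)}]=\mathcal{E}_t[X_{\tau_1(t)}+\mathcal{E}_{\tau_1(t)+\delta}[\sum_{l=2}^{n+1}X_{\tau_l(t)}]]$, bounds the inner term by $Z^n_{\tau_1(t)+\delta}\le Y^{(n)}_{\tau_1(t)+\delta}$ via the induction hypothesis, and recognizes $\esssup_{\tau_1}\mathcal{E}_t[X_{\tau_1}+Y^{(n)}_{\tau_1+\delta}]=Y^{(n+1)}_t$ from the definition of $X^{(n+1)}$. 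The paper's version is the continuous-time analogue of the discrete-time dynamic programming of Lemma \ref{l6} and yields all levels $j$ in one induction; your version reuses machinery already verified in Theorem \ref{t} and delivers, for each fixed $j$, the optimality of $\tau^*_{\cdot,j}$ as a byproduct, at the cost of confirming the level-uniformity you describe. One very minor point: for the upper bound $\mathcal{E}_t[\sum_{i=1}^j X_{\tau_i}]\le\mathcal{E}_t[X^{(j)}_{\tau_1}]\le Y^{(j)}_t$ the $g$-martingale property of the stopped value process is not actually needed — it enters only in \eqref{a5}, i.e.\ in showing that the candidate stopping times attain $Y^{(j)}_t$, which you handle separately via the recorded representation — so its mention there is superfluous but harmless.
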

    
    \begin{proof}
    	Noting that $(\tau^*_{1,j}(t),\cdots,\tau^*_{j,j}(t))\in \bar{\mathcal{S}}_t(j,\delta)$, it follows that 
    	\begin{displaymath}
    	Y^{(j)}_t=\mathcal{E}_t[\sum_{i=1}^j X_{\tau^*_{i,j}(t)}]\leq\esssup_{\tau^{(j)}(t)\in\bar{\mathcal{S}}_t(j,\delta)}\mathcal{E}_t[\sum_{i=1}^j X_{\tau_i(t)}]= Z^j_t.
    	\end{displaymath}
    	
    	Now we are in a position to show the inverse inequality. It is obvious to check that $Z^1_t\leq Y^{(1)}_t$, for any $t\in[0,T]$ (more precisely, they are equal). Assume that for $j=n$, we have $Z^n_t\leq Y^{(n)}_t$, for any $t\in[0,T]$. Then we obtain that 
    	\begin{align*}
    	Z^{n+1}_t&=\esssup_{\tau^{(n+1)}(t)\in\bar{\mathcal{S}}_t(n+1,\delta)}\mathcal{E}_t[\sum_{l=1}^{n+1}X_{\tau_l(t)}]\\
    	&=\esssup_{\tau^{(n+1)}(t)\in\bar{\mathcal{S}}_t(n+1,\delta)}\mathcal{E}_t[X_{\tau_1(t)}+\mathcal{E}_{\tau_1(t)+\delta}[\sum_{l=2}^{n+1}X_{\tau_l(t)}]]\\
    	&\leq \esssup_{\tau^{(n+1)}(t)\in\bar{\mathcal{S}}_t(n+1,\delta)}\mathcal{E}_t[X_{\tau_1(t)}+\esssup_{\widetilde{\tau}^{(n)}(\tau_1(t)+\delta)\in\bar{\mathcal{S}}_{\tau_1(t)+\delta}(n,\delta)}\mathcal{E}_{\tau_1(t)+\delta}[\sum_{l=1}^n X_{\widetilde{\tau}_l(\tau_1(t)+\delta)}]]\\
    	&=\esssup_{\tau_1(t)\in \bar{\mathcal{S}}_{t}}\mathcal{E}_t[X(\tau_1(t))+Z^n_{\tau_1(t)+\delta}]
    	\leq \esssup_{\tau_{1}(t)\in \bar{\mathcal{S}}_{t}}\mathcal{E}_t[X_{\tau_1(t)}+Y^{(n)}_{\tau_1(t)+\delta}]=Y^{(n+1)}_t.
    	\end{align*} 
    	The proof is complete.
    \end{proof}

   In the following, we will show that the value function of the optimal multiple stopping problem in the discrete-time case converges to the one of the continuous time case. For this purpose, set $\mathcal{I}_n=\{0,\frac{1}{2^n}\wedge T,\frac{2}{2^n}\wedge T,\cdots,T\}$ and for any stopping time $\tau$, 
   \begin{equation}\label{approximate}
     \tau^n=\sum_{k=1}^{M}\frac{k}{2^n}I_{\{\frac{k-1}{2^n}\leq \tau<\frac{k}{2^n}\}}+TI_{\{\tau\geq \frac{M}{2^n}\}},
   \end{equation}
   where $M$ satisfies $\frac{M}{2^n}\leq T<\frac{M+1}{2^n}$. Let $\mathcal{S}^n_t(L,\delta)$ be the set of stopping times $\tau^{(L),n}=(\tau^n_1(t),\cdots,\tau^n_L(t))$ such that $\tau^n_j(t)$ takes values in $\mathcal{I}_n$ for all $1\leq j\leq L$, $t\leq \tau^n_1(t)$, for any $2\leq j\leq L$, $\tau^n_{j-1}(t)+\delta\leq \tau^n_j(t)$. The value function of the multiple stopping problem can be defined as the following:
   \begin{displaymath}
   	  Z^n_L(t)=\esssup_{\tau^{(l),n}(t)\in\mathcal{S}^n_t(L,\delta)}\mathcal{E}_t[\sum_{j=1}^{L}X_{\tau^n_j(t)}].
   \end{displaymath}
   \begin{proposition}
   	  Under Assumption \ref{a1}, we have 
   	  \begin{displaymath}
   	  	Z^L_t=\lim_{n\rightarrow\infty} Z^n_l(t).
   	  \end{displaymath}
   \end{proposition}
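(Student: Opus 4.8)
The plan is to sandwich $\lim_{n\to\infty}Z^n_L(t)$ between $Z^L_t$ on both sides. For the upper bound, note that the dyadic grids are nested, $\mathcal{I}_n\subseteq\mathcal{I}_{n+1}$, so any vector in $\mathcal{S}^n_t(L,\delta)$ is still admissible in $\mathcal{S}^{n+1}_t(L,\delta)$ and, a fortiori, belongs to $\bar{\mathcal{S}}_t(L,\delta)$. Consequently $n\mapsto Z^n_L(t)$ is non-decreasing and $Z^n_L(t)\le Z^L_t$ for every $n$; in particular the limit exists and is bounded above by $Z^L_t$. It then remains to establish the reverse inequality $\lim_{n\to\infty}Z^n_L(t)\ge Z^L_t$.

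For the lower bound I would fix an arbitrary $\tau^{(L)}(t)=(\tau_1,\dots,\tau_L)\in\bar{\mathcal{S}}_t(L,\delta)$ and build dyadic approximations from above. Rounding each $\tau_j$ up to the grid via \eqref{approximate} produces stopping times $\sigma^n_j\downarrow\tau_j$ valued in $\mathcal{I}_n$, but rounding can shrink a gap $\tau_j-\tau_{j-1}\ge\delta$ by as much as $2^{-n}$, so the separation constraint may fail. To repair this I would set $\tau^n_j:=\sigma^n_j+(j-1)2^{-n}$, which is again dyadic, still satisfies $\tau^n_1\ge\tau_1\ge t$, and now obeys $\tau^n_j-\tau^n_{j-1}\ge(\delta-2^{-n})+2^{-n}=\delta$, so that $\tau^{(L),n}(t)\in\mathcal{S}^n_t(L,\delta)$. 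Since the deterministic shifts $(j-1)2^{-n}\to0$, we still have $\tau^n_j\downarrow\tau_j$ almost surely. The only delicate point is behaviour at the horizon: when the construction pushes an exercise to or beyond $T$, the standing convention $X_s=0$ for $s>T$ together with $X_T=0$ (Assumption \ref{a1}) makes those terms vanish, and path continuity guarantees that $X_{\tau_j}$ is itself small whenever $\tau_j$ is close to $T$, so no reward is lost in the limit.

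Next I would pass to the limit in the reward. Because $X$ has continuous paths (Assumption \ref{a1}), $\tau^n_j\downarrow\tau_j$ forces $X_{\tau^n_j}\to X_{\tau_j}$ almost surely, and $|X_{\tau^n_j}|\le\bar X$ with $E[\bar X^2]<\infty$, so dominated convergence gives $\sum_{j=1}^L X_{\tau^n_j}\to\sum_{j=1}^L X_{\tau_j}$ in $L^2(P_0)$. The evaluation $\mathcal{E}_t$ is $L^2$-continuous: from the domination $|\mathcal{E}_t[\xi]-\mathcal{E}_t[\eta]|\le\mathcal{E}^\kappa_t[|\xi-\eta|]$ and the standard BSDE estimate $E[|\mathcal{E}^\kappa_t[\zeta]|^2]\le C\,E[|\zeta|^2]$, one obtains $\mathcal{E}_t[\sum_j X_{\tau^n_j}]\to\mathcal{E}_t[\sum_j X_{\tau_j}]$ in $L^2$. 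Since $\tau^{(L),n}(t)\in\mathcal{S}^n_t(L,\delta)$ yields $Z^n_L(t)\ge\mathcal{E}_t[\sum_j X_{\tau^n_j}]$, letting $n\to\infty$ (along an almost surely convergent subsequence, using monotonicity in $n$) gives $\lim_{n\to\infty}Z^n_L(t)\ge\mathcal{E}_t[\sum_j X_{\tau_j}]$ almost surely. As $\tau^{(L)}(t)$ was arbitrary and the essential supremum is the least almost-sure upper bound of the family $\{\mathcal{E}_t[\sum_j X_{\tau_j}]:\tau^{(L)}(t)\in\bar{\mathcal{S}}_t(L,\delta)\}$, I conclude $\lim_{n\to\infty}Z^n_L(t)\ge Z^L_t$, which together with the upper bound proves the claim.

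The main obstacle is the second step: discretizing the exercise vector while keeping it admissible, that is, respecting the \emph{hard} $\delta$-separation and staying consistent with the horizon, and then ensuring the rewards still converge. The deterministic shift handles admissibility cleanly (and would be unnecessary were $\delta$ a dyadic rational, since then rounding up already preserves the gaps for large $n$), while the convergence $X_{\tau^n_j}\to X_{\tau_j}$ rests entirely on the right-continuity of the paths of $X$ along the down-approximation $\tau^n_j\downarrow\tau_j$. This is precisely the regularity secured by Assumption \ref{a1} and propagated to the auxiliary reward processes in Lemma \ref{l1}, and it is the reason path continuity cannot be dispensed with here.
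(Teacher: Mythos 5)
Your proof follows the paper's argument exactly: the nesting $\mathcal{S}^n_t(L,\delta)\subset\mathcal{S}^{n+1}_t(L,\delta)\subset\bar{\mathcal{S}}_t(L,\delta)$ gives monotonicity and the upper bound $\lim_{n\to\infty}Z^n_L(t)\le Z^L_t$, while right-approximation of an arbitrary admissible vector on the dyadic grid, path continuity of $X$, and dominated convergence (transferred through $\mathcal{E}_t$ via the $\mathcal{E}^\kappa$-domination) give the reverse inequality. The only point where you go beyond the paper is the deterministic shift $\tau^n_j=\sigma^n_j+(j-1)2^{-n}$ to restore the $\delta$-separation after rounding; the paper merely asserts admissibility once $2^{-(n+1)}<\delta$, which does not by itself guarantee $\tau^n_j-\tau^n_{j-1}\ge\delta$ when the original gap equals $\delta$ exactly, so your repair tightens a step the paper glosses over.
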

   
   \begin{proof}
   	 It is easy to check that for any $n\in\mathbb{N}$, $\mathcal{S}^n_t(L,\delta)\subset \mathcal{S}^{n+1}_t(L,\delta)\subset \bar{\mathcal{S}}_t(L,\delta)$. Therefore, we have $Z^n_L(t)\leq Z^{n+1}_L(t)\leq Z^L_t$, which implies that $Z_t^L\geq \lim_{n\rightarrow\infty} Z^n_L(t)$. On the other hand, for any $(\tau_1,\cdots,\tau_L)\in \bar{\mathcal{S}}_t(L,\delta)$, we may approximate $\tau_j$ from the right by $\tau^n_j$ as Equation \eqref{approximate}, for all $1\leq j\leq L$. For $n$ large enough such that $\frac{1}{2^{n+1}}<\delta$, we can check that $(\tau^n_1,\cdots,\tau^n_L)\in \mathcal{S}_t^n(L,\delta)$. By the continuity of $X$ and dominated convergence theorem, we get that 
   	 \begin{displaymath}
   	 	\mathcal{E}_t[\sum_{j=1}^{L}X_{\tau_j}]=\lim_{n\rightarrow\infty}\mathcal{E}_t[\sum_{j=1}^{L}X_{\tau^n_j}]\leq \lim_{n\rightarrow\infty} Z^n_L(t).
   	 \end{displaymath}
   	 Since $(\tau_1,\cdots,\tau_L)$ is arbitrarily chosen, the above equation yields that $Z_t^L\leq \lim_{n\rightarrow\infty} Z^n_L(t)$. The proof is complete.
   \end{proof}

   \begin{remark}
   	 In fact, Theorem 3.1 in \cite{CR} still holds true without the assumption that $g$ is concave in $z$, which implies that all the results in this section hold under this weak assumption.
   \end{remark}

   \section{Some properties of the value function and optimal stopping times}

   We first consider a special case that $\delta=0$, which means that the holder of the contract can exercise more than one rights at the same time. This is trivial for the linear expectation case since the optimal multiple stopping times should satisfy $\tau_{i,L}^*=\tau^*$, $i=1,\cdots,L$, where $\tau^*$ is the optimal stopping time for the single stopping problem. However, due to the nonlinearity of $g$-expectation, this property is not obvious for the Knightian uncertainty case. We can only prove this property for some typical situations.
   
   \begin{proposition}
   	 Let $g$ satisfy Assumption \ref{a7} (without concavitiy) and we assume additionally, $g$ is positive homogeneous and sub-additive. Suppose that the nonngative reward process $X$ is RCLL and left-continous in $g$-expectation with $E[|\bar{X}|^2]<\infty$, where $\bar{X}=\sup_{t\in[0,T]}X_t$. Consider the optimal multiple stopping problem
   	 \begin{displaymath}
   	 	Z^L_t=\esssup_{\tau^{(L)}(t)\in\bar{\mathcal{S}}_t(L)}\mathcal{E}_t[\sum_{i=1}^L X_{\tau_i(t)}],
   	 \end{displaymath}
   	 where $\bar{\mathcal{S}}_t(L)=\bar{\mathcal{S}}_t(L,0)$. Then, the optimal multiple stopping times $(\tau^*_1(t),\cdots,\tau^*_L(t))$ satisfy $\tau^*_i(t)=\tau^*_t$, $i=1,\cdots,L$, where $\tau^*_t$ is optimal for 
   	 \begin{displaymath}
   	 	V_t=\esssup_{\tau\in\bar{\mathcal{S}}_t}\mathcal{E}_t[X_\tau].
   	 \end{displaymath}
   \end{proposition}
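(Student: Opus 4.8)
The plan is to sandwich the multiple-stopping value $Z^L_t$ between two copies of $L V_t$, exploiting that under the extra hypotheses the conditional $g$-expectation $\mathcal{E}_t[\cdot]$ is sublinear: since $g$ is positively homogeneous and sub-additive in $z$, the map $\xi\mapsto\mathcal{E}_t[\xi]$ is positively homogeneous, $\mathcal{E}_t[\lambda\xi]=\lambda\mathcal{E}_t[\xi]$ for $\lambda\geq 0$, and sub-additive, $\mathcal{E}_t[\xi+\eta]\leq\mathcal{E}_t[\xi]+\mathcal{E}_t[\eta]$ (both inherited from the scaling and comparison properties of the defining BSDE). The key simplification when $\delta=0$ is that admissibility in $\bar{\mathcal{S}}_t(L,0)$ reduces to $t\leq\tau_1(t)\leq\cdots\leq\tau_L(t)$, so each component $\tau_i(t)$ is itself an admissible single stopping time in $\bar{\mathcal{S}}_t$, and the diagonal vector $(\tau^*_t,\cdots,\tau^*_t)$ is admissible.

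For the upper bound I would take an arbitrary $(\tau_1(t),\cdots,\tau_L(t))\in\bar{\mathcal{S}}_t(L,0)$ and apply sub-additivity term by term,
\[
\mathcal{E}_t\Big[\sum_{i=1}^L X_{\tau_i(t)}\Big]\leq\sum_{i=1}^L\mathcal{E}_t[X_{\tau_i(t)}]\leq L\,V_t,
\]
where the last inequality uses that each $\tau_i(t)\in\bar{\mathcal{S}}_t$ together with the definition of $V_t$; taking the essential supremum over admissible vectors gives $Z^L_t\leq L\,V_t$. For the matching lower bound I would plug the diagonal strategy into the value and use positive homogeneity,
\[
Z^L_t\geq\mathcal{E}_t\Big[\sum_{i=1}^L X_{\tau^*_t}\Big]=\mathcal{E}_t[L\,X_{\tau^*_t}]=L\,\mathcal{E}_t[X_{\tau^*_t}]=L\,V_t,
\]
the last equality being optimality of $\tau^*_t$ for the single-stopping problem. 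Hence $Z^L_t=L\,V_t$ and the diagonal choice attains it, which is exactly the claim $\tau^*_i(t)=\tau^*_t$.

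The only ingredient not produced by the two displayed chains is the existence of an optimal single stopping time $\tau^*_t$ attaining $V_t$; this is where the regularity hypotheses (nonnegativity, RCLL paths, left-continuity in $g$-expectation and $E[\bar{X}^2]<\infty$) enter, via the classical single optimal stopping theory under $g$-expectation, i.e.\ the Snell envelope / reflected-BSDE characterization already invoked in Section 3. I do not expect any genuine obstacle here: once sublinearity is recorded, the upper bound is pure sub-additivity and the lower bound is pure positive homogeneity, and the constraint $\delta=0$ is precisely what makes the constant vector $(\tau^*_t,\cdots,\tau^*_t)$ admissible. The one point to state carefully is that positive homogeneity of $\mathcal{E}_t$ is applied with the integer factor $L$ (indeed it holds for all $\lambda\geq 0$): this is where sub-additivity alone would not suffice, and the homogeneity assumption on $g$ is essential.
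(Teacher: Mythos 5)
Your proposal is correct and follows essentially the same route as the paper: the paper's one-line proof is exactly the sandwich $\mathcal{E}_t[L X_{\tau^*_t}]\leq Z_t^L\leq \esssup\sum_{i=1}^L\mathcal{E}_t[X_{\tau_i(t)}]\leq L V_t=L\mathcal{E}_t[X_{\tau^*_t}]=\mathcal{E}_t[L X_{\tau^*_t}]$, i.e.\ sub-additivity for the upper bound and positive homogeneity (with the integer factor $L$) plus admissibility of the diagonal vector for the lower bound. Your additional remarks on where the regularity hypotheses enter (existence of the optimizer $\tau^*_t$ for the single-stopping problem) are a sensible elaboration of a point the paper leaves implicit.
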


   \begin{proof}
   	 It is easy to verify that 
   	 \begin{align*}
   	 	\mathcal{E}_t[L X_{\tau^*_t}]\leq Z_t^L\leq \esssup_{\tau^{(L)}(t)\in\bar{\mathcal{S}}_t(L)}\sum_{i=1}^L \mathcal{E}_t[X_{\tau_i(t)}]\leq L\esssup_{\tau\in\bar{\mathcal{S}}_t}\mathcal{E}_t[X_\tau]=L V_t=L\mathcal{E}_t[X_{\tau^*_t}]=\mathcal{E}_t[L X_{\tau^*_t}].
   	 \end{align*}
   	 Hence, the result follows.
   	 \end{proof}

   \begin{proposition}\label{p1}
   	If $g$ is a sublinear function satisfying (1), (2) in Assumption \ref{a7} together with $g(t,0)=0$, then for any $i=1,2,\cdots,L$, we have
   	\begin{displaymath}
   	Y_t^{(i)}\leq Y_t^{(1)}+\mathcal{E}_t[Y_{t+\delta}^{(i-1)}].
   	\end{displaymath}
   	Consequently, we have $\tau_{i,L}^*(t)\leq \zeta_i^*(t)$, for any $i=1,\cdots,L$ and $t\in[0,T]$, where
   	\begin{displaymath}
   	\zeta_i^*(t)=\inf\{t\geq \delta+\tau_{i-1,L}^*(t): Y_t^{(1)}=X_t^{(1)}\}I_{\{\delta+\tau_{i-1,L}^*(t)\leq T\}}+(\delta+\tau_{i-1,L}^*(t))I_{\{\delta+\tau_{i-1,L}^*(t)>T\}}.
   	\end{displaymath}
   \end{proposition}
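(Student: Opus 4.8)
The plan is to prove the pointwise inequality $Y_t^{(i)}\leq Y_t^{(1)}+\mathcal{E}_t[Y_{t+\delta}^{(i-1)}]$ first, and then deduce the ordering of the stopping times by showing that the exercise region associated with the single-right value function $Y^{(1)}$ is contained in the exercise region at every level. The key structural ingredients are the representation $Y_t^{(i)}=\esssup_{\tau\in\bar{\mathcal{S}}_t}\mathcal{E}_t[X_\tau^{(i)}]$ with $X_\tau^{(i)}=X_\tau+\mathcal{E}_\tau[Y_{\tau+\delta}^{(i-1)}]$, the subadditivity of $\mathcal{E}_t$ (which is exactly what sublinearity of $g$ buys us, and is where concavity is not needed), time-consistency, and the fact that each $Y^{(i-1)}$ is a continuous $g$-supermartingale in the strong sense, as furnished by Lemma \ref{l1} together with the characterization stated after it.

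For the inequality I would fix a competitor $\tau\in\bar{\mathcal{S}}_t$ and estimate $\mathcal{E}_t[X_\tau^{(i)}]$. Subadditivity gives $\mathcal{E}_t[X_\tau+\mathcal{E}_\tau[Y_{\tau+\delta}^{(i-1)}]]\leq\mathcal{E}_t[X_\tau]+\mathcal{E}_t[\mathcal{E}_\tau[Y_{\tau+\delta}^{(i-1)}]]$, and since $t\leq\tau$ the tower property collapses the second term to $\mathcal{E}_t[Y_{\tau+\delta}^{(i-1)}]$. I would then invoke the strong-sense $g$-supermartingale property of $Y^{(i-1)}$: because $(t+\delta)\wedge T\leq(\tau+\delta)\wedge T$ are stopping times bounded by $T$, one has $\mathcal{E}_{(t+\delta)\wedge T}[Y_{(\tau+\delta)\wedge T}^{(i-1)}]\leq Y_{(t+\delta)\wedge T}^{(i-1)}$; applying $\mathcal{E}_t$ and the tower property yields $\mathcal{E}_t[Y_{\tau+\delta}^{(i-1)}]\leq\mathcal{E}_t[Y_{t+\delta}^{(i-1)}]$. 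Here the convention $Y_s^{(i-1)}=0$ for $s>T$ is harmless, since $X_T=0$ forces $Y_T^{(i-1)}=X_T^{(i-1)}=0$, so $Y_{\tau+\delta}^{(i-1)}=Y_{(\tau+\delta)\wedge T}^{(i-1)}$ identically. Combining the three steps gives, for every $\tau$, $\mathcal{E}_t[X_\tau^{(i)}]\leq\mathcal{E}_t[X_\tau]+\mathcal{E}_t[Y_{t+\delta}^{(i-1)}]$; taking the essential supremum over $\tau$ and noting that the last term is independent of $\tau$ produces $Y_t^{(i)}\leq Y_t^{(1)}+\mathcal{E}_t[Y_{t+\delta}^{(i-1)}]$, since $\esssup_\tau\mathcal{E}_t[X_\tau]=Y_t^{(1)}$ (recall $X^{(1)}=X$).

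For the consequence I would establish, for each level $j=1,\dots,L$, the inclusion of exercise regions $\{s:Y_s^{(1)}=X_s^{(1)}\}\subseteq\{s:Y_s^{(j)}=X_s^{(j)}\}$. Indeed, on the event $Y_s^{(1)}=X_s^{(1)}=X_s$, the already-proven inequality at time $s$ and level $j$ gives $Y_s^{(j)}\leq Y_s^{(1)}+\mathcal{E}_s[Y_{s+\delta}^{(j-1)}]=X_s+\mathcal{E}_s[Y_{s+\delta}^{(j-1)}]=X_s^{(j)}$, while always $Y_s^{(j)}\geq X_s^{(j)}$, forcing equality. Fixing the common starting point $r=\delta+\tau^*_{i-1,L}(t)$ (with the convention $\tau^*_{0,L}(t)=t-\delta$ so that $r=t$ when $i=1$) and taking $j=L-i+1\geq 1$, the definition of $\tau^*_{i,L}(t)$ in \eqref{e31} is the first hitting time after $r$ of the level-$j$ region, whereas $\zeta_i^*(t)$ is the first hitting time after $r$ of the level-$1$ region. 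Because the larger region is hit no later, on $\{r\leq T\}$ we get $\tau^*_{i,L}(t)\leq\zeta_i^*(t)$, and on $\{r>T\}$ both equal $r$, giving the claim.

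The step I expect to be the main obstacle is the supermartingale comparison $\mathcal{E}_t[Y_{\tau+\delta}^{(i-1)}]\leq\mathcal{E}_t[Y_{t+\delta}^{(i-1)}]$ for a \emph{stopping time} $\tau$: this requires the strong-sense (not merely weak-sense) $g$-supermartingale property, which is why the continuity of $Y^{(i-1)}$ from Lemma \ref{l1} is essential, together with the careful truncation at $T$ to reconcile the convention $Y_s^{(i-1)}=0$, $s>T$, with the supermartingale inequality on $[0,T]$. Everything else reduces to subadditivity, the tower property, and the inf-comparison of hitting times over nested sets.
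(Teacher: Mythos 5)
Your proposal is correct, and the second half (the hitting--time comparison via inclusion of exercise regions, forced by $X^{(L-i+1)}_{\zeta_i^*}\geq Y^{(L-i+1)}_{\zeta_i^*}\geq X^{(L-i+1)}_{\zeta_i^*}$ at points where $Y^{(1)}=X^{(1)}$) is essentially identical to the paper's argument, including the trivial treatment of the event $\{\delta+\tau^*_{i-1,L}(t)>T\}$. For the main inequality, however, you take a genuinely different route. The paper sets $V^{(i)}_t=Y^{(1)}_t+\mathcal{E}_t[Y^{(i-1)}_{t+\delta}]$, verifies by sublinearity and the tower property that $V^{(i)}$ is a $g$-supermartingale dominating $X^{(i)}$ (splitting into the cases $t\leq T-\delta$ and $t>T-\delta$), and then concludes $Y^{(i)}\leq V^{(i)}$ from the characterization of $Y^{(i)}$ as the \emph{smallest} $g$-supermartingale dominating $X^{(i)}$. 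You instead estimate each competitor directly, $\mathcal{E}_t[X^{(i)}_\tau]\leq \mathcal{E}_t[X_\tau]+\mathcal{E}_t[Y^{(i-1)}_{\tau+\delta}]\leq \mathcal{E}_t[X_\tau]+\mathcal{E}_t[Y^{(i-1)}_{t+\delta}]$, and take the essential supremum; this avoids invoking the minimality of $Y^{(i)}$ among dominating supermartingales, at the price of needing the strong-sense (stopping-time) supermartingale property of $Y^{(i-1)}$ rather than only the deterministic-time inequality the paper uses. Both ingredients are supplied by the characterization quoted after Lemma \ref{l1}, so the trade-off is harmless; your explicit truncation at $T$ to reconcile the convention $Y^{(i-1)}_s=0$ for $s>T$ with the supermartingale inequality is handled correctly and is, if anything, more careful than the paper's case split.
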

   
   \begin{proof}
   	It is easy to verify that  $\mathcal{E}$ is sublinear under the new assumption on $g$. By Theorem 3.1 in \cite{CR}, we know that for any $i=0,1,\cdots,L$, $Y^{(i)}$ is the smallest $g$-supermartingale dominating the process $X^{(i)}$. Let $V_t^{(i)}=Y_t^{(1)}+\mathcal{E}_t[Y_{t+\delta}^{(i-1)}]$. It is easy to check that $V^{(i)}\geq X^{(i)}$. We claim that $V^{(i)}$ is a $g$-supremartingale. Indeed, and for any $0\leq s\leq t\leq T-\delta$, we have
   	\begin{displaymath}
   	\mathcal{E}_s[V_t^{(i)}]\leq \mathcal{E}_s[Y_t^{(1)}]+\mathcal{E}_s[\mathcal{E}_t[Y^{(i-1)}_{t+\delta}]]=\mathcal{E}_s[Y_t^{(1)}]+\mathcal{E}_{s}[\mathcal{E}_{s+\delta}[Y^{(i-1)}_{t+\delta}]]\leq Y_s^{(1)}+\mathcal{E}_s[Y_{s+\delta}^{(i-1)}]=V_s^{(i)}.
   	\end{displaymath}
   	For any  $T-\delta<t\leq T$ and $s\leq t$, we have
   	\begin{displaymath}
   	\mathcal{E}_s[V_t^{(i)}]=\mathcal{E}_s[Y_t^{(1)}]\leq Y_s^{(1)}\leq V_s^{(i)}.
   	\end{displaymath}
   	Therefore, $V^{(i)}$ is a $g$-supermartingale dominating the process $X^{(i)}$, which implies that $Y^{(i)}\leq V^{(i)}$.
   	
   	On the set that $\{\delta+\tau_{i-1,L}^*(t)>T\}$, it is obvious that $\tau_{i,L}^*(t)= \zeta_i^*(t)$. On the set $\{\delta+\tau_{i-1}^*(t)\leq T\}$, it follows that $Y^{(1)}_{\zeta_i^*}=X^{(1)}_{\zeta_i^*}=X_{\zeta_i^*}$. Consequently, we have
   	\begin{displaymath}
   	X^{(L-i+1)}_{\zeta_i^*}=X_{\zeta_i^*}+\mathcal{E}_{\zeta_i^*}[Y^{(L-i)}_{\zeta_i^*+\delta}]=Y^{(1)}_{\zeta_i^*}+\mathcal{E}_{\zeta_i^*}[Y^{(L-i)}_{\zeta_i^*+\delta}]\geq Y^{(L-i+1)}_{\zeta_i^*}\geq X^{(L-i+1)}_{\zeta_i^*}.
   	\end{displaymath}
   	By the definition of $\tau_{i,L}^*(t)$, we finally get the desired result.
   \end{proof}
   
   \begin{remark}
   	In fact, $\tau_{i,L}^*(t)$ is the optimal exercise time for the $i$-th right for the optimal multiple stopping problem, while $\zeta_i^*(t)$ can be regarded as the optimal stopping time for the optimal single problem starting from time $\delta+\tau_{i-1,L}^*(t)$, which is the first time that one could exercise her $i$-th right for the optimal multiple stopping problem. The above proposition indicates that if the agent has more than one exercise right, the optimal exercise time is earlier than the one for the single stopping problem.
   \end{remark}

     If the reward process $X$ is a $g$-submartingale in strong sense (we do not need to assume that $X$ is left-continuous and the terminal value is $0$), we have $\mathcal{E}_\sigma[X_\tau]\geq X_\sigma$ for any stopping times $\sigma,\tau$ with $\sigma\leq \tau$, which implies that $\mathcal{E}_t[X_T]\geq \mathcal{E}_t[X_\tau]$ for any $\tau\in\mathcal{S}_t$. Therefore, we have 
   \begin{displaymath}
   Z_t^1=\esssup_{\tau\in\bar{\mathcal{S}}_t}\mathcal{E}_t[X_\tau]=\mathcal{E}_t[X_T],
   \end{displaymath}
   that is, the terminal time $T$ is optimal. Furthermore, assume that the $g$-expectation is super-additive. For the multiple stopping case, suppose that $d=2,\cdots,L$ satisfies $\delta(d-1)\leq T-t<\delta d$. We claim that 
   \begin{displaymath}
   Z_t^L=\esssup_{\tau^{(L)}(t)\in\bar{\mathcal{S}}_t(L,\delta)}\mathcal{E}_t[\sum_{i=1}^L X_{\tau_i(t)}]=\mathcal{E}_t[\sum_{i=0}^{d-1} X_{T-i\delta}].
   \end{displaymath}
   First, under the constraints that $\delta(d-1)\leq T-t<\delta d$, we have $\tau_{d+1}(t)>T$, which implies that $X_{\tau_j(t)}=0$ for any $j=d+1,\cdots,l$. It follows that 
   \begin{displaymath}
   Z_t^L=\esssup_{\tau^{(d)}(t)\in\bar{\mathcal{S}}_t(d,\delta)}\mathcal{E}_t[\sum_{i=1}^d X_{\tau_i(t)}]
   \end{displaymath}
   It remains to prove that for any $\tau^{(d)}(t)\in\mathcal{S}_t(d,\delta)$, we have $\mathcal{E}_t[\sum_{i=1}^d X_{\tau_i(t)}]\leq \mathcal{E}_t[\sum_{i=0}^{d-1} X_{T-i\delta}]$. For any $\xi\in L^2(P)$, it is easy to check that the process $\widetilde{X}$ is a $g$-submartingale, where $\widetilde{X}_t=X_t+\mathcal{E}_t[\xi]$. By the submartingale property, we may obtain  that 
   \begin{align*}
   \mathcal{E}_t[\sum_{i=1}^d X_{\tau_i(t)}]&=\mathcal{E}_t[\sum_{i=1}^{d-1} X_{\tau_i(t)}+\mathcal{E}_{\tau_{d-1}(t)}[X_{\tau_d(t)}]]\leq \mathcal{E}_t[\sum_{i=1}^{d-1} X_{\tau_i(t)}+\mathcal{E}_{\tau_{d-1}(t)}[X_{T}]]\\
   &=\mathcal{E}_t[\sum_{i=1}^{d-2} X_{\tau_i(t)}+\mathcal{E}_{\tau_{d-2}(t)}[X_{\tau_{d-1}(t)}+\mathcal{E}_{\tau_{d-1}(t)}[X_T]]]\\
   &\leq \mathcal{E}_t[\sum_{i=1}^{d-2} X_{\tau_i(t)}+\mathcal{E}_{\tau_{d-2}(t)}[X_{T-\delta}+\mathcal{E}_{T-\delta}[X_T]]]\\
   &=\mathcal{E}_t[\sum_{i=1}^{d-3} X_{\tau_i(t)}+\mathcal{E}_{\tau_{d-3}(t)}[X_{\tau_{d-2}(t)}+\mathcal{E}_{\tau_{d-2}(t)}[X_{T-\delta}+X_T]]]\\
   &\leq \cdots\leq \mathcal{E}_t[\sum_{i=0}^{d-1} X_{T-i\delta}],
   \end{align*}
   which generalizes the following result.
   
   \begin{proposition}
   	Suppose that the reward process $X$ is a $g$-submartingale with $E[\sup_{t\in[0,T]}|X_t|^2]<\infty$ and $g$ is a superlinear function satisfying (1), (2) in Assumption \ref{a7} together with $g(t,0)=0$. Then, if $d=2,\cdots,L$ satisfies $\delta(d-1)\leq T-t<\delta d$, we have 
   	\begin{displaymath}
   	Z_t^L=\esssup_{\tau^{(L)}(t)\in\bar{\mathcal{S}}_t(L,\delta)}\mathcal{E}_t[\sum_{i=1}^L X_{\tau_i(t)}]=\mathcal{E}_t[\sum_{i=0}^{d-1} X_{T-i\delta}].
   	\end{displaymath}
   \end{proposition}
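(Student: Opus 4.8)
The plan is to prove the two inequalities separately, the inequality ``$\geq$'' being the easy one. For this direction I would simply exhibit the admissible candidate $\tau_i=T-(d-i)\delta$ for $i=1,\dots,d$, padded by $\tau_i=\tau_{i-1}+\delta>T$ for $d<i\le L$. The constraint $\delta(d-1)\le T-t$ guarantees $\tau_1=T-(d-1)\delta\ge t$, the successive gaps equal $\delta$, and $X_{\tau_i}=0$ for $i>d$; hence this vector lies in $\bar{\mathcal{S}}_t(L,\delta)$ and realizes the value $\mathcal{E}_t[\sum_{j=0}^{d-1}X_{T-j\delta}]$, giving $Z_t^L\ge\mathcal{E}_t[\sum_{j=0}^{d-1}X_{T-j\delta}]$.

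The content is in ``$\leq$''. First I would reduce the number of effective rights from $L$ to $d$: for any $\tau^{(L)}(t)\in\bar{\mathcal{S}}_t(L,\delta)$ one has $\tau_{d+1}(t)\ge\tau_1(t)+d\delta\ge t+d\delta>T$, so $X_{\tau_j(t)}=0$ for $j\ge d+1$ and the problem collapses to the supremum over $\bar{\mathcal{S}}_t(d,\delta)$. I would then run a backward telescoping induction over $k=1,\dots,d$, establishing for every $\tau^{(d)}(t)\in\bar{\mathcal{S}}_t(d,\delta)$ that
\[
\mathcal{E}_t\Big[\sum_{i=1}^d X_{\tau_i}\Big]\le \mathcal{E}_t\Big[\sum_{i=1}^{d-k}X_{\tau_i}+\mathcal{E}_{\tau_{d-k}}\Big[\sum_{j=0}^{k-1}X_{T-j\delta}\Big]\Big],
\]
with the convention $\mathcal{E}_{\tau_0}=\mathcal{E}_t$ and empty sums equal to zero, so that the case $k=d$ yields the asserted identity. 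The base case $k=1$ is the single-stopping optimality of the terminal time for the submartingale $X$, namely $\mathcal{E}_{\tau_{d-1}}[X_{\tau_d}]\le\mathcal{E}_{\tau_{d-1}}[X_T]$.

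Each inductive step rests on two ingredients. First I introduce the augmented reward $\widetilde{X}^{(k)}_s:=X_s+\mathcal{E}_s[\sum_{j=0}^{k-1}X_{T-j\delta}]$: since $X$ is a $g$-submartingale and $s\mapsto\mathcal{E}_s[\sum_{j=0}^{k-1}X_{T-j\delta}]$ is a $g$-martingale, the super-additivity of $\mathcal{E}$ (i.e. the superlinearity of $g$) shows $\mathcal{E}_s[\widetilde{X}^{(k)}_t]\ge\mathcal{E}_s[X_t]+\mathcal{E}_s[\sum_{j}X_{T-j\delta}]\ge\widetilde{X}^{(k)}_s$, so $\widetilde{X}^{(k)}$ is again a $g$-submartingale. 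Second, the translation invariance of the $g$-expectation (valid because $g=g(t,z)$ does not depend on $y$) together with time consistency lets me rewrite the right-hand side as $\mathcal{E}_t[\sum_{i=1}^{d-k-1}X_{\tau_i}+\mathcal{E}_{\tau_{d-k-1}}[\widetilde{X}^{(k)}_{\tau_{d-k}}]]$, apply the submartingale endpoint inequality $\mathcal{E}_{\tau_{d-k-1}}[\widetilde{X}^{(k)}_{\tau_{d-k}}]\le\mathcal{E}_{\tau_{d-k-1}}[\widetilde{X}^{(k)}_{T-k\delta}]$, and finally collapse the nested conditional expectations via $\widetilde{X}^{(k)}_{T-k\delta}=\mathcal{E}_{T-k\delta}[\sum_{j=0}^{k}X_{T-j\delta}]$ and time consistency, reaching the hypothesis at level $k+1$.

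The main obstacle I expect is the admissibility bookkeeping rather than any deep inequality. The endpoint step requires $\tau_{d-k}\le T-k\delta$; for tuples with all $\tau_i\le T$ this follows from $\tau_{d-k}\le\tau_d-k\delta\le T-k\delta$, but I must also dispose of degenerate strategies that waste a right ($\tau_d>T$). I would handle these by restricting the supremum to $\{\tau_d\le T\}$ using nonnegativity of $X$ and super-additivity: a wasted right reduces the strategy to a $(d-1)$-exercise one, whose value is dominated by $\mathcal{E}_t[\sum_{j=0}^{d-2}X_{T-j\delta}]\le\mathcal{E}_t[\sum_{j=0}^{d-1}X_{T-j\delta}]$, the last bound again following from super-additivity and $\mathcal{E}_t[X_{T-(d-1)\delta}]\ge0$. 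Verifying the $g$-submartingale structure of $\widetilde{X}^{(k)}$ and isolating where super-additivity is used is the conceptual heart; the remaining steps are the careful threading of the refraction constraint through the induction.
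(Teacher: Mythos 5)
Your proposal is correct and follows essentially the same route as the paper: reduce to $d$ effective rights via $\tau_{d+1}(t)>T$, then telescope backwards using the fact that $s\mapsto X_s+\mathcal{E}_s[\xi]$ is again a $g$-submartingale (by super-additivity) together with the optimality of the terminal time for submartingales. You are in fact more explicit than the paper about the admissibility bookkeeping (the exhibited lower-bound candidate and the degenerate case $\tau_d>T$), but these are refinements of the same argument rather than a different approach.
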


   Suppose that the $g$-expectation degenerates into the $\kappa$-ignorance $\mathcal{E}^{-\kappa}[\cdot]$ as stated in \cite{CE}, that is, the generator $g$ takes the form that $g(t,z)=-\kappa|z|$. Assume that the payoff process $X$ is given by $\{f(t,S^x_t)\}_{t\in[0,T]}$, where $f(t,x)$ is a continuous, monotonic function with polynomial growth and $f(T,x)=0$, and $S^x$ evolves as the following stochastic differential equation
   \begin{displaymath}
   	 dS^x_t=bS^x_tdt+\sigma S^x_tdB_t, \ S^x_0=x,
   \end{displaymath}
   for some positive constant $b$, $\sigma$. Without loss of generality, we assume that $f$ is increasing in $x$. Let $v^{(1)}:[0,T]\times\mathbb{R}\rightarrow\mathbb{R}$ solve the following Hamilton-Jacobi-Bellman equation
   \begin{equation}\label{eq1}
   	\max_{(t,x)\in[0,T]\times\mathbb{R}}\{f(t,x)-v(t,x), \partial_t v(t,x)+\mathcal{L}v(t,x)-\kappa\sigma(x)\partial_x v(t,x)\}=0,
   \end{equation}  
   with terminal condition $v(T,x)=0$, where $\mathcal{L}=\frac{1}{2}\sigma^2\frac{\partial^2}{\partial_xx}+b\frac{\partial}{\partial_x}$. If we furthermore assume that $v^{(1)}$ and its generalized first derivative satisfy the polynomial growth condition, by Theorem 4.1 in \cite{CR}, we have 
   \begin{equation}\label{eq2}
   	Y^{(1)}_t=\esssup_{\tau\in\mathcal{S}_t}\mathcal{E}^{-\kappa}_t[X_\tau]=v^{(1)}(t,S^x_t)=\esssup_{\tau\in\mathcal{S}_t}{E}^{-\kappa}_t[X_\tau],
   \end{equation} 
   where $E^{-\kappa}_t[\cdot]$ is the conditional expectation taken under $P^{-\kappa}$ whose Girsanov kernel is $-\kappa$, i.e., 
   \begin{displaymath}
   	\frac{dP^{-\kappa}}{dP_0}|_{\mathcal{F}_T}=\exp(-\kappa B_T-\frac{1}{2}\kappa^2 T).
   \end{displaymath}

   Note that the monotonicity of $f$ ensures the monotonicty of $v^{(1)}$. By Theorem 2 in \cite{CKW}, for any fixed $t\in[0,T]$, we have $z_s\geq 0$ a.e. $s\in[0,t+\delta]$, where $z$ is the second component of solution to the following BSDE
   \begin{displaymath}
   	y_s=v^{(1)}(t+\delta, S^x_{t+\delta})-\int_s^{t+\delta}\kappa|z_r|dr-\int_s^{t+\delta}z_rdB_r.
   \end{displaymath}
   By the Girsanov transformation and Markov property, we obtain that
   \begin{align*}
   	y_t&=\mathcal{E}^{-\kappa}_t[v^{(1)}(t+\delta,S^x_{t+\delta})]=E^{-\kappa}_t[v^{(1)}(t+\delta,S^x_{t+\delta})]\\
   	&=E_t[v^{(1)}(t+\delta,\widetilde{S}^x_{t+\delta})]=E[v^{(1)}(t+\delta,\widetilde{S}_\delta^y)]|_{y=\widetilde{S}_t^x}\\
   	&=E^{-\kappa}[v^{(1)}(t+\delta, S_\delta^y)]|_{y=\widetilde{S}^x_t},
   \end{align*}
   where $\widetilde{S}^x$ is the solution of the following SDE:
   \begin{displaymath}
   	 d\widetilde{S}_t^x=(b-\kappa\sigma)\widetilde{S}_t^xdt+\sigma\widetilde{S}_t^x dB_t,\  \widetilde{S}_0^x=x.
   \end{displaymath}
   It is easy to check that $\widetilde{S}^x_t=\exp(-\kappa\sigma t)S_t^x$. Hence, $y_t=E^{-\kappa}[v^{(1)}(t+\delta, S_\delta^{\exp(-\kappa\sigma t)y})]|_{y={S}^x_t}$ Set $f^{(2)}(t,x)=f(t,x)+E^{-\kappa}[v^{(1)}(t+\delta,S_\delta^{\exp(-\kappa\sigma t)x})]$. It is easy to check that $f^{(2)}$ is increasing in $x$ and $X^{(2)}_t=f^{(2)}(t,S^x_t)$. Now let $v^{(2)}$ be the solution of the HJB equation \eqref{eq1} with $f(t,x)=f^{(2)}(t,x)$. Again by Theorem 4.1 in \cite{CR}, if $v^{(2)}$ and its generalized first derivative satisfy the polynomial growth condition, we have
   \begin{equation}\label{eq3}\begin{split}
   	Y^{(2)}_t&=\esssup_{\tau\in\mathcal{S}_t}\mathcal{E}^{-\kappa}_t[X^{(2)}_\tau]=v^{(2)}(t,S^x_t)=\esssup_{\tau\in\mathcal{S}_t}E^{-\kappa}_t[X^{(2)}_\tau]\\
   	&=\esssup_{\tau\in\mathcal{S}_t} E^{-\kappa}_t[X_\tau+E^{-\kappa}_{\tau}[Y^{(1)}_{\tau+\delta}]].
   \end{split}\end{equation}
   Recalling Equation \eqref{eq2}, we conclude that $Y^{(1)}$ is a supermartingale dominating the process $X=X^{(1)}$ under $P^{-\kappa}$. Hence, for any stopping times $\tau,\sigma$ with $\tau+\delta\leq \sigma$, we have
   \begin{equation}\label{eq4}
   	E^{-\kappa}_\tau[Y^{(1)}_{\tau+\delta}]\geq E^{-\kappa}_\tau[E^{-\kappa}_{\tau+\delta}[Y^{(1)}_\sigma]]\geq {E}^{-\kappa}_\tau[X^{(1)}_\sigma]={E}^{-\kappa}_\tau[X_\sigma].
   \end{equation}
   Combining Equation \eqref{eq3} and \eqref{eq4}, we derive that 
   \begin{displaymath}
   \esssup_{\tau^{(2)}(t)\in\mathcal{S}_t(2,\delta)}\mathcal{E}^{-\kappa}_t[X_{\tau_1}+X_{\tau_2}]=	Y_t^{(2)}\geq \esssup_{\tau\in\mathcal{S}_t,\sigma\geq \tau+\delta}E^{-\kappa}_t[X_\tau+X_\sigma]=\esssup_{\tau^{(2)}(t)\in\mathcal{S}_t(2,\delta)}{E}^{-\kappa}_t[X_{\tau_1}+X_{\tau_2}],
   \end{displaymath}
    which implies that 
    \begin{displaymath}
    	Y_t^{(2)}=\esssup_{\tau^{(2)}(t)\in\mathcal{S}_t(2,\delta)}{E}^{-\kappa}_t[X_{\tau_1}+X_{\tau_2}].
    \end{displaymath}
    
    \begin{theorem}
    	Assume that the payoff function $f$ is continuous, increasing in $x$ and satisfies the polynomial growth condition. Denote by $v^{(i)}(t,x)$ the solution of HJB equation \eqref{eq1} with $f(t,x)$ is replaced by $f^{(i)}(t,x)$, where $f^{(1)}(t,x)=f(t,x)$ and
    	\begin{displaymath}
    		f^{(i)}(t,x)=f(t,x)+E^{-\kappa}[v^{(i-1)}(t+\delta,S_\delta^{\exp(-\kappa\sigma t)x})], \ i=2,\cdots,L.
    	\end{displaymath}
    	Assume that $v^{(i)}$ and its generalized first derivative satisfy the polynomial growth condition. Then the optimal multiple stopping problem under $\kappa$-ignorance coincides with the one under $P^{-\kappa}$ with value function $Z_t^i=Y_t^{(i)}=v^{(i)}(t,S_t^x)$.
    \end{theorem}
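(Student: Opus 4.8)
The plan is to argue by induction on $i$, reducing the $\kappa$-ignorance multiple stopping problem to a linear problem under $P^{-\kappa}$ at each stage. The base case $i=1$ is exactly Equation \eqref{eq2}, which gives $Z^1_t=Y^{(1)}_t=v^{(1)}(t,S^x_t)$ and identifies the $\kappa$-ignorance single-stopping value with the one computed under $P^{-\kappa}$. Throughout the induction I would carry two pieces of information forward: that $v^{(i)}$ is increasing in $x$, and that $Y^{(i)}_t=v^{(i)}(t,S^x_t)$ coincides with the value of the corresponding multiple stopping problem under $P^{-\kappa}$ over $\bar{\mathcal{S}}_t(i,\delta)$.

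For the inductive step, assume the statement for $i-1$. First I would identify the new reward process, starting from $X^{(i)}_t=X_t+\mathcal{E}^{-\kappa}_t[Y^{(i-1)}_{t+\delta}]=f(t,S^x_t)+\mathcal{E}^{-\kappa}_t[v^{(i-1)}(t+\delta,S^x_{t+\delta})]$. Because $v^{(i-1)}$ is increasing in $x$, Theorem 2 in \cite{CKW} yields that the control process $z$ in the associated BSDE is nonnegative, so that $-\kappa|z|=-\kappa z$ and the nonlinear term collapses to a linear expectation under $P^{-\kappa}$; the Girsanov transformation and the Markov property of $S^x$ then give $X^{(i)}_t=f^{(i)}(t,S^x_t)$, exactly as in the computation preceding Equation \eqref{eq3}. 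Since $f$ and $v^{(i-1)}$ are both increasing, so is $f^{(i)}$. Applying Theorem 4.1 in \cite{CR} to the reward $f^{(i)}(t,S^x_t)$ then produces $Y^{(i)}_t=v^{(i)}(t,S^x_t)=\esssup_{\tau}E^{-\kappa}_t[X^{(i)}_\tau]$ and, again by monotonicity, that $v^{(i)}$ is increasing in $x$, which keeps the induction running.

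It remains to show that $Y^{(i)}_t$ equals the multiple stopping value under $P^{-\kappa}$. Writing $Y^{(i)}_t=\esssup_{\tau}E^{-\kappa}_t[X_\tau+E^{-\kappa}_\tau[Y^{(i-1)}_{\tau+\delta}]]$ and invoking the induction hypothesis that $Y^{(i-1)}_{\tau+\delta}$ is the $P^{-\kappa}$ multiple stopping value over $\bar{\mathcal{S}}_{\tau+\delta}(i-1,\delta)$, the tower property gives $E^{-\kappa}_\tau[Y^{(i-1)}_{\tau+\delta}]\geq E^{-\kappa}_\tau[\sum_{l=1}^{i-1}X_{\widetilde{\tau}_l}]$ for every admissible continuation, whence $Y^{(i)}_t\geq \esssup_{\tau^{(i)}(t)\in\bar{\mathcal{S}}_t(i,\delta)}E^{-\kappa}_t[\sum_{l=1}^{i}X_{\tau_l}]$; this generalizes Equations \eqref{eq3}--\eqref{eq4}. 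The reverse inequality is immediate from $\mathcal{E}^{-\kappa}\leq E^{-\kappa}$ together with the already established identity $Z^i_t=Y^{(i)}_t$. Combining the two inequalities closes the induction and gives $Z^i_t=Y^{(i)}_t=v^{(i)}(t,S^x_t)$.

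The main obstacle I anticipate is the propagation of monotonicity: the entire reduction to $P^{-\kappa}$ hinges on $v^{(i-1)}$ being increasing so that $z\geq 0$ and the nonlinearity disappears, and one must verify that this property is genuinely inherited first by $f^{(i)}$ and then by $v^{(i)}$ at each step, while simultaneously checking that the polynomial-growth and integrability hypotheses required for Theorem 4.1 in \cite{CR} and Theorem 2 in \cite{CKW} remain valid for the iterated reward functions $f^{(i)}$.
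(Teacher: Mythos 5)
Your proposal is correct and follows essentially the same route as the paper: the paper carries out exactly this computation explicitly for $i=1,2$ (Equations \eqref{eq2}--\eqref{eq4}, using Theorem 4.1 in \cite{CR}, the comonotonicity result of \cite{CKW} to get $z\geq 0$, and the Girsanov/Markov reduction to $P^{-\kappa}$) and states the theorem as the inductive generalization, which is what you supply. Your two refinements --- replacing the supermartingale argument for the lower bound by a direct appeal to the induction hypothesis, and obtaining the reverse inequality from $\mathcal{E}^{-\kappa}_t\leq E^{-\kappa}_t$ --- are exactly the intended steps, and you correctly flag the propagation of monotonicity and polynomial growth as the hypotheses that must be checked at each stage.
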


\end{document}